\def\Xint#1{\mathchoice
	{\XXint\displaystyle\textstyle{#1}}
	{\XXint\textstyle\scriptstyle{#1}}
	{\XXint\scriptstyle\scriptscriptstyle{#1}}
	{\XXint\scriptscriptstyle\scriptscriptstyle{#1}}
	\!\int}
\def\XXint#1#2#3{{\setbox0=\hbox{$#1{#2#3}{\int}$ }
		\vcenter{\hbox{$#2#3$ }}\kern-.6\wd0}}
\def\dashint{\Xint-}
\newtheorem{theorem}{Theorem}[section]
\newtheorem{lemma}[theorem]{Lemma}
\newtheorem{corollary}[theorem]{Corollary}
\newtheorem{definition}[theorem]{Definition}
\newtheorem{example}[theorem]{Example}
\numberwithin{equation}{section}
\title{Existence and uniqueness of limits at infinity for homogeneous Sobolev functions}
\author{Pekka Koskela}
\address{Department of Mathematics and Statistics \\
P.O. Box 35 \\
FI-40014 University of Jyväskylä, Finland}
\email{pekka.j.koskela@jyu.fi}
\author{Khanh Nguyen}
\address{ Faculty of Mathematics, Mechanics and Informatics,
			 VNU University of Science, Vietnam National University, Hanoi, Vietnam}
\email{khanhnn@vnu.edu.vn}
\subjclass[2020]{46E36,  31B25, 31B15.\\ Key words and phases: limit at infinity, Sobolev function, metric measure space.}
\begin{document}
\maketitle
\begin{abstract}
  We establish the existence and uniqueness of  limits at infinity along infinite curves outside a zero modulus family for functions in a homogeneous Sobolev space under the assumption that the underlying space is equipped with a doubling measure which supports a Poincar\'e inequality.  We also  characterize the settings where this conclusion is nontrivial.
Secondly, we introduce notions of weak polar coordinate systems and radial curves on metric measure spaces. Then sufficient and necessary conditions for existence of radial limits are given. As a consequence, we   characterize  the existence of radial limits in certain concrete settings.
\end{abstract}

\section{Introduction}
Let $(X,d,\mu)$ be a metric measure space with metric $d$ and Borel regular measure $\mu$. A locally rectifiable curve $\gamma:[0,\infty)\to X$ is an \textit{infinite curve}  if   $\gamma\setminus B\neq\emptyset$ for all balls $B$. Then $\int_{\gamma}ds=\infty$ if $\gamma$ is an infinite curve. We write $\Gamma^\infty$ for the collection of all infinite curves and denote by $\dot N^{1,p}(X), 1\leq p<\infty,$ the collection of all locally integrable functions that have a $p$-integrable upper gradient on $(X,d,\mu)$. Here the notion of upper gradients is given in Section \ref{modulus-capacity}.

The aim of this paper is to study the existence and uniqueness of the limit 
	\begin{equation}
\label{infinity-limit}\lim_{t\to\infty}u(\gamma(t))
\end{equation} for $\gamma\in\Gamma^\infty$ and for $u\in\dot N^{1,p}(X)$.
We say that \textit{ the existence and uniqueness} of \eqref{infinity-limit} hold for $p$-a.e  $\gamma\in\Gamma^\infty$ if,  for every   $u\in\dot N^{1,p}(X)$, there exists  $c\in \mathbb R$  such that  
\[\lim_{t\to\infty} u(\gamma(t)) \text{ exists and }\lim_{t\to\infty} u(\gamma(t))= c \text{\rm \ \  for $p$-a.e $\gamma\in\Gamma^\infty$}.
\]  Here the notion of $p$-a.e curve is given in Section \ref{modulus-capacity}.

Towards uniqueness, we employ an annular chain property at a given based point $O$, see  Definition \ref{dyadic}. This chain property holds, for example, if  there exists a constant $C\geq 1$ for which any pair of points in $B(O,r)\setminus B(O,r/2)$ can be joined by a curve in $B(O,C r)\setminus B(O,r/C),$ see Section \ref{sec-dyadic} for more details. This holds especially when $X$ is annularly quasiconvex as defined in \cite[Section 8.3]{pekka}.

Our first result deals with the limit \eqref{infinity-limit} for $p$-a.e $\gamma\in\Gamma^\infty$.

\begin{theorem} 
	\label{thm1} Let $1\leq p<\infty$. Suppose that $(X,d,\mu)$ is a doubling metric measure space that supports a $p$-Poincar\'e inequality. Assume that $X$ has the annular chain property.
	Then the existence and uniqueness of \eqref{infinity-limit} hold for $p$-a.e $\gamma\in\Gamma^\infty$.
\end{theorem}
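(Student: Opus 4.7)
The plan is to decompose the proof into existence, which is a standard $p$-modulus argument, and uniqueness, which relies on combining the $p$-Poincar\'e inequality with the annular chain property. For existence, fix $u \in \dot N^{1,p}(X)$ with a $p$-integrable upper gradient $g$. Two families have $p$-modulus zero: the curves on which the upper gradient inequality for $(u,g)$ fails, and the curves on which $\int_\gamma g \, ds = \infty$. For $\gamma \in \Gamma^\infty$ outside these, the upper gradient inequality together with $\int_\gamma g \, ds < \infty$ forces $t \mapsto u(\gamma(t))$ to be Cauchy as $t \to \infty$, so the limit in \eqref{infinity-limit} exists; call this $p$-a.e.\ subfamily the \emph{good} family.

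For uniqueness, let $\gamma_1, \gamma_2$ be good curves with respective limits $c_1, c_2$. For each large $k$, pick $t_k^i$ with $\gamma_i(t_k^i) \in A_k := B(O, 2^{k+1}) \setminus B(O, 2^{k-1})$; this is possible because each $\gamma_i$ is infinite. Then $u(\gamma_i(t_k^i)) \to c_i$, so it suffices to show $|u(\gamma_1(t_k^1)) - u(\gamma_2(t_k^2))| \to 0$. I would invoke the annular chain property at $O$ to produce, for each such $k$, a chain of balls $B_1^k, \dots, B_N^k \subset B(O, C 2^k) \setminus B(O, 2^k / C)$ of radius comparable to $2^k$ and with cardinality $N$ bounded independently of $k$, joining $\gamma_1(t_k^1) \in B_1^k$ to $\gamma_2(t_k^2) \in B_N^k$ in a way compatible with a chain-Poincar\'e estimate. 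Telescoping the $p$-Poincar\'e inequality along the chain bounds $|u_{B_1^k} - u_{B_N^k}|$, and comparing $u(\gamma_i(t_k^i))$ to the corresponding end-ball averages through the pointwise-to-average estimates valid $p$-quasi-everywhere yields
\[
|u(\gamma_1(t_k^1)) - u(\gamma_2(t_k^2))| \le C\, 2^k\, \mu(B(O, 2^k))^{-1/p} \left( \int_{B(O, C 2^k) \setminus B(O, 2^k/C)} g^p \, d\mu \right)^{1/p}.
\]
The right-hand side tends to $0$ as $k \to \infty$: the $L^p$-tail of $g$ decays since $g \in L^p(X)$, while the prefactor $2^k \mu(B(O, 2^k))^{-1/p}$ is controlled by the lower mass growth of $\mu(B(O, 2^k))$ forced by doubling together with connectedness and unboundedness of $X$.

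The main obstacle is the capacitary bookkeeping behind the above comparison. One must show that outside a single $p$-modulus zero family, every infinite curve $\gamma$ threads, in every dyadic annulus $A_k$, through points at which $u$ admits the desired pointwise-to-ball-average comparison. This entails simultaneously handling the failure of the upper gradient inequality, Lebesgue-point type failures, and compatibility with the chain joining, and absorbing all of them into one $p$-exceptional family uniformly in $k$. It is this uniform-in-scale argument, rather than any single ball-wise estimate, where the combination of doubling, $p$-Poincar\'e, and the annular chain property genuinely cooperates; any weakening of the chain property would leave the long-range telescoping uncontrolled, and any weakening of the mass-growth estimate would leave the prefactor above unbounded.
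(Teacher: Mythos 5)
Your existence argument is the same as the paper's and is fine: the family of infinite curves along which $\int_\gamma g\,ds=\infty$ or the upper gradient inequality fails is $p$-exceptional, and on the complement $u\circ\gamma$ is Cauchy.

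The uniqueness part has a genuine gap, located exactly where you placed your confidence: the prefactor $2^k\,\mu(B(O,2^k))^{-1/p}$ does \emph{not} tend to zero in general. Doubling together with connectedness only yields a reverse-doubling estimate $\mu(B(O,2^k))\gtrsim 2^{ks}$ for \emph{some} exponent $s>0$ determined by the data, and when $s<p$ the prefactor $2^{k(1-s/p)}$ blows up. This is not a technicality: the statement you are actually trying to prove --- that \emph{any} two individual good curves have the same limit --- is false under the hypotheses of the theorem. Take $X=\mathbb{R}^2$, $p=3$, and $u(r,\theta)=f(\theta)$ for a smooth nonconstant $f$ (cut off near the origin); then $|\nabla u|\approx |f'(\theta)|/r$ is in $L^3$, yet $u$ has different limits along different rays. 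The theorem survives there only because $\mathrm{Mod}_3(\Gamma^\infty)=0$, so the conclusion is vacuous; a pointwise two-curve comparison cannot see this and therefore cannot close. (A secondary, repairable vagueness is your ``capacitary bookkeeping'' for the pointwise-to-average comparison; the paper handles it by measuring, via a telescoping Lebesgue-point estimate, the set of points in the annulus where $|u-u_{B(x,2^j/c_1)}|>1/5$, rather than by excluding curves.)

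The paper's proof avoids the volume-growth issue by arguing at the level of curve families rather than individual curves. Assuming two subfamilies $\Gamma_E,\Gamma_F\subset\Gamma^\infty$ of modulus $\geq\delta>0$ with limits $\geq 2$ and $\leq 0$, one extracts subfamilies $\Gamma_E^{j_E},\Gamma_F^{j_F}$ of positive modulus on which $u\ge 3/2$, resp.\ $u\le 1/2$, beyond a fixed radius, and considers their traces $E_j,F_j$ in the annulus $A_{\lambda 2^{j+1}}$. Since each such curve has a subcurve of length $\geq\lambda 2^{j+1}$ inside the annulus, $\chi_{E_j}/(\lambda 2^{j+1})$ is admissible and hence $\mu(E_j)\geq(\lambda 2^{j+1})^p\,\mathrm{Mod}_p(\Gamma_E^{j_E})$. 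This measure lower bound supplies exactly the factor $(2^j)^p$ needed to cancel the scaling factor coming from the chained Poincar\'e inequality, yielding $0<\min\{\delta_E,\delta_F\}\lesssim\int_{c_2\cdot A_{\lambda 2^{j+1}}}\rho_u^p\,d\mu$ for all large $j$, which contradicts $\rho_u\in L^p$. To repair your argument you must replace the comparison of two individual curves by this comparison of two positive-modulus families.
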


In Theorem \ref{thm1}, the uniqueness does not hold without  some additional assumption besides doubling and Poincar\'e whose definitions are given in Section \ref{admissible}.  For example, on a space with at least two ends one easily constructs a Lipschitz function in $\dot N^{1,p}(X)$ so that the tail value of two of the ends is a different constant. A tree in \cite{BBGS17,NgWa20}  (or a weighted real line in \cite{BBK06,BBS20}) is doubling, supports a Poincar\'e inequality and has more than one end.

The conclusion of Theorem \ref{thm1} is nontrivial only when the $p$-modulus of $\Gamma^\infty$ is strictly positive. 
Let us suppose that $\mu$ is a $Q$-Ahlfors regular measure as in Section \ref{admissible} where $1< Q\leq p<\infty$. Then the $p$-modulus of $\Gamma^\infty$ vanishes, and hence there exists $u\in\dot N^{1,p}(X)$ such that $\lim_{t\to\infty}u(\gamma(t))=\infty$ for every $\gamma\in\Gamma^\infty$, see \cite[Page 135]{pekka}.  One can actually characterize the settings where the $p$-modulus of the family $\Gamma^\infty$ is strictly positive. This is the content of Theorem \ref{thm1.2-1802} below.
	
Let $O$ be a fixed point in $X$ and let $A_{2^j}(O)$ be the annuli $B(O,2^{j+1})\setminus B(O,2^j)$ for $j\in\mathbb N$. We define 
\[  \mathcal R_{p}(O):=\sum_{j\in \mathbb N}(2^j)^{\frac{p}{p-1}}\mu^{\frac{1}{1-p}}(A_{2^{j}}(O)) \text{\rm \ \ if $p>1$ and \ \ } \mathcal R_{1}(O):=\sup_{j\in\mathbb N} 2^j\mu^{-1}(A_{2^j}(O)).\]
The finiteness of $\mathcal R_{p}(O)$ is then a volume growth condition. A reader familiar with classification theory should recognize this as a condition towards $p$-hyperbolicity  \cite{HoKo01,Hol99,G99}.
	
\begin{theorem}\label{thm1.2-1802}
Let $1\leq p<\infty$. Suppose that $(X,d,\mu)$ is a complete doubling metric measure space that supports a $p$-Poincar\'e inequality. Assume that $X$ has the annular chain property at O. Then the following statements are equivalent:
\begin{enumerate}
\item[I.] $\mathcal R_p(O)<\infty$.
\item[II.]$\text{\rm Mod}_p(\Gamma^\infty)>0$.
\item[III.] For every $u\in \dot N^{1,p}(X)$, there exists an infinite curve $\gamma\in\Gamma^\infty$ such that $\lim_{t\to\infty}u(\gamma(t))$ exists.
\end{enumerate}
\end{theorem}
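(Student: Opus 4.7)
The plan is to prove the cycle $(\text{II})\Rightarrow(\text{III})\Rightarrow(\text{I})\Rightarrow(\text{II})$.

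The step $(\text{II})\Rightarrow(\text{III})$ is essentially Theorem~\ref{thm1}: given any $u\in\dot N^{1,p}(X)$, Theorem~\ref{thm1} supplies $c\in\mathbb R$ such that the exceptional sub-family of $\Gamma^\infty$ along which $\lim_{t\to\infty}u(\gamma(t))\ne c$ has zero $p$-modulus. Positivity of $\text{Mod}_p(\Gamma^\infty)$ then forces the good sub-family to be non-empty, giving the required $\gamma$.

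For $(\text{III})\Rightarrow(\text{I})$ I argue the contrapositive: assuming $\mathcal R_p(O)=\infty$, I build $u\in\dot N^{1,p}(X)$ with no finite limit along any $\gamma\in\Gamma^\infty$. Fix Lipschitz cutoffs $\phi_j:X\to[0,1]$ equal to $0$ on $B(O,2^j)$ and to $1$ on $X\setminus B(O,2^{j+1})$ with Lipschitz constant at most $C\cdot 2^{-j}$, and set $u:=\sum_j a_j\phi_j$ for positive weights $a_j$ to be chosen. Then $g:=C\sum_j a_j\,2^{-j}\mathbf{1}_{A_{2^j}(O)}$ is an upper gradient of $u$, so by the essentially disjoint supports
\[
\int_X g^p\,d\mu\leq C^p\sum_j a_j^{\,p}\cdot 2^{-jp}\mu(A_{2^j}(O)).
\]
Writing $\beta_j:=2^{-jp}\mu(A_{2^j}(O))$, the assumption $\mathcal R_p(O)=\infty$ reads $\sum_j\beta_j^{-1/(p-1)}=\infty$ for $p>1$, with an analogous supremum statement when $p=1$. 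A standard blocking construction (group indices into consecutive blocks $B_k$ with $\sum_{j\in B_k}\beta_j^{-1/(p-1)}\in[1,2]$ and take $a_j$ proportional to $\beta_j^{-1/(p-1)}/k$ on $B_k$) produces $a_j>0$ with $\sum_j a_j=\infty$ yet $\sum_j a_j^{\,p}\beta_j<\infty$, so $u\in\dot N^{1,p}(X)$ and $u(x)\to\infty$ as $d(O,x)\to\infty$. Since any $\gamma\in\Gamma^\infty$ has unbounded image, $u\circ\gamma$ is unbounded on $[0,\infty)$, and it either tends to $+\infty$ (if $\gamma$ escapes) or oscillates (if $\liminf_t d(O,\gamma(t))<\infty$); in either case no finite limit exists.

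For $(\text{I})\Rightarrow(\text{II})$ the plan is to pass through the variational $p$-capacity of a fixed ball $\overline{B(O,r_0)}$ in $X$. The modulus of the sub-family of $\Gamma^\infty$ consisting of infinite curves starting in $\overline{B(O,r_0)}$ is bounded below by a constant multiple of $\text{Cap}_p(\overline{B(O,r_0)},X)$ via the standard modulus-to-capacity reduction available in doubling $p$-Poincar\'e spaces, and monotonicity gives $\text{Mod}_p(\Gamma^\infty)$ dominating the same quantity. The telescoping estimate in the doubling plus Poincar\'e plus annular chain setting then delivers
\[
\text{Cap}_p(\overline{B(O,r_0)},X)\geq c\,\mathcal R_p(O)^{\,1-p}\qquad(p>1),
\]
with the parallel statement for $p=1$. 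Since $\mathcal R_p(O)<\infty$, this capacity is strictly positive, hence so is $\text{Mod}_p(\Gamma^\infty)$. The hard part is exactly this capacity lower bound: the Poincar\'e inequality only controls averages on single balls, so propagating boundary data from $\overline{B(O,r_0)}$ out to infinity requires chaining balls across every annulus $A_{2^j}(O)$ via the annular chain property, with the per-annulus Poincar\'e contribution telescoped to reconstruct the series $\mathcal R_p(O)$; the supremum form of $\mathcal R_1$ will require a separate telescoping along the extremal annulus.
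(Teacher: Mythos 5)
Your cycle $(\mathrm{II})\Rightarrow(\mathrm{III})\Rightarrow(\mathrm{I})\Rightarrow(\mathrm{II})$ is exactly the paper's. The implication $(\mathrm{II})\Rightarrow(\mathrm{III})$ via Theorem \ref{thm1} is correct. Your $(\mathrm{III})\Rightarrow(\mathrm{I})$ is also correct and is a legitimate variant of the paper's Lemma \ref{lem4.4}: you build $u$ as a series of radial cutoffs $\sum_j a_j\phi_j$ with a blocking of the indices, whereas the paper takes $u(x)=\inf_{\gamma_{O,x}}\int_{\gamma_{O,x}}g_p\,ds$ for a blocked density $g_p$; the two constructions are interchangeable here (your normalization $a_j=\beta_j^{-1/(p-1)}/(kS_k)$ on a block with sum $S_k\geq 1$ does give $\sum_j a_j=\infty$ and $\sum_j a_j^p\beta_j\leq\sum_k k^{-p}<\infty$), and your observation that an oscillating infinite curve also fails to have a finite limit is the right way to cover curves with $\liminf_t d(O,\gamma(t))<\infty$.

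The gap is in $(\mathrm{I})\Rightarrow(\mathrm{II})$, and it is not where you locate it. You call the capacity lower bound $\mathrm{Cap}_p(\overline{B(O,r_0)},X)\gtrsim\mathcal R_p(O)^{1-p}$ ``the hard part''; that estimate is indeed nontrivial, but it is available off the shelf (Holopainen--Koskela, \cite[Theorem 2.10]{HoKo01}), \emph{provided the metric is geodesic} --- which is precisely why the theorem assumes completeness: the paper first replaces $d$ by a biLipschitz-equivalent geodesic metric (\cite[Corollary 8.3.16]{pekka}). Your proposal never uses completeness, so as written this step is not justified. More seriously, the step you dismiss as ``the standard modulus-to-capacity reduction'' plus ``monotonicity'' is the genuinely delicate one. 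The capacity-versus-modulus comparison from \cite{HK98} gives a uniform lower bound $\mathrm{Mod}_p\bigl(\Gamma(B(O,1),X\setminus B(O,2^i))\bigr)\geq\delta>0$ for the families of \emph{finite} curves joining the ball to the complements of larger and larger balls. The family $\Gamma^\infty$ is not a superfamily of any of these, so monotonicity of the modulus says nothing; passing from a uniform lower bound on these condenser moduli to $\mathrm{Mod}_p(\Gamma^\infty)>0$ requires extracting genuine infinite curves in the limit, and this is the content of Shanmugalingam's theorem on $p$-hyperbolicity (\cite[Theorem 4.2]{Sha19}), which the paper invokes and which again relies on completeness. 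Without supplying that limiting argument (or citing it), your $(\mathrm{I})\Rightarrow(\mathrm{II})$ does not close.
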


The space $X=[0,\infty)\subset \mathbb R$ equipped with the Lebesgue measure and the usual distance is doubling and supports a $p$-Poincar\'e inequality for all $p\ge 1.$ 
It also has the annular chain property and there is (modulo reparametrizations and translations) only one injective infinite curve. Moreover, $\mathcal R_p(0)<\infty$ precisely when $p=1,$ in which case every
function in $\dot N^{1,1}(X)$ has a (unique) limit along this curve. Hence the existence of a single ``good" curve is the best one can obtain in this generality. However, for 
$\mathbb R^n,$ $n\ge 2,$ equipped with the Lebesgue measure and Euclidean distance, $\mathcal R_p(0)<\infty$ precisely when $p<n$ and one then obtains a unique limit along
the radial half-line in the direction $\xi\in S^{n-1},$ for almost every $\xi$ with respect to the surface measure on the boundary $S^{n-1}$ of the unit ball.  This is a consequence of the existence of spherical (or polar) coordinates. Towards establishing limits along a recognizable family of infinite curves, let us introduce an abstract version of polar coordinates.

Let $\mathbb S$ be a nonempty set (a set of indices), $O\in X$ and consider collections $\Gamma^{O}(\mathbb S)$ consisting of
$\gamma_{\xi}\in \Gamma^{\infty},$  $\xi\in \mathbb S,$  with 
$\gamma_{\xi}(0)=O.$ 
We say that $X$ has a \textit{weak polar coordinate system} at the \textit{coordinate point $O$} if there exist a set $\mathbb S$ of indices with a Radon probability measure $\sigma$ on 
$\mathbb S$, a choice of $\Gamma^{O}(\mathbb S),$ a function $h:X\to\mathbb [0,\infty)$, and a constant $\mathcal C>0$   such that 
\begin{equation}
\label{intro-polar}
\int_{\mathbb S}\int_{\gamma^{O}_\xi}|f|h\ dsd\sigma\leq \mathcal C\int_{X}|f|d\mu \text{\rm \ \ for every integrable function  $f$. }
\end{equation}
We then call $h$ a
\textit{coordinate weight} and each $\gamma^{O}_\xi\in\Gamma^{O}(\mathbb S)$ a \textit{radial curve} in the direction $\xi\in\mathbb S$  from $O$. 
Towards the existence of  \eqref{infinity-limit} along radial curves, consider a  polar coordinate \eqref{intro-polar} at  $O$.
 We set
\[
R_{p}(h,O):=\int_{X\setminus B(O,1)}h^{\frac{p}{1-p}}d\mu \text{\ \  if $p>1$ and\ \ }R_{1}(h,O):=\| h^{-1}\|_{L^\infty_\mu(X\setminus B(O,1))}.
\]
Given $\xi\in F\subset \mathbb S$ with $\liminf_{t\to\infty}d(O,\gamma_\xi^O(t))>1$, there is $t_\xi>0$ such that $d(O,\gamma_\xi^O(t_\xi))=1$ and $d(O,\gamma_\xi^O(t))>1$ for all $t>t_\xi$. Let $\hat\gamma_\xi^O:[0,\infty)\to X$ be the infinite curve starting from $\gamma_\xi^O(t_\xi)$ defined by $\hat\gamma_\xi^O(t):=\gamma_\xi^O(t+t_\xi)$ for $t\geq 0$. 
The collection of all these infinite curves $\hat\gamma_\xi^O$ with respect to $\xi\in F$ satisfying $\liminf_{t\to\infty}d(O,\gamma_\xi^O(t))>1$ is denoted by $\hat\Gamma^O(F)$.

In order to state our next result, we introduce the following properties:
\begin{enumerate}
	\item[1.] For every $u\in \dot N^{1,p}(X)$, there is  a radial curve $\gamma$ such that $\liminf_{t\to\infty}u(\gamma(t))$ is finite.
	\item[2.] The limit \eqref{infinity-limit} exists for $\sigma$-a.e $\xi \in \mathbb S$, i.e. the limit
	\[\lim_{t\to\infty}u(\gamma^{O}_\xi(t)) \text{\rm \ \ exists for $\sigma$-a.e $\xi\in\mathbb S$, for every $u\in\dot N^{1,p}(X)$}.
	\]
	\item[3.]   The existence and uniqueness of \eqref{infinity-limit} are obtained for $\sigma$-a.e $\xi\in\mathbb S$, i.e.  for every  $u\in\dot N^{1,p}(X),$ there exists  $c\in\mathbb R$  such that 
	\[\lim_{t\to\infty}u(\gamma^{O}_\xi(t))\text{\rm \ exists\ and \ }\lim_{t\to\infty}u(\gamma^{O}_\xi(t))=c \text{\rm \ \  for $\sigma$-a.e $\xi\in\mathbb S$}.
	\]
	\item[4.]  $\text{\rm Mod}_p(\hat\Gamma^{O}(\mathbb S))>0$. 
	\item[5.] If $F\subseteq \mathbb S$ satisfies $\sigma(F)>0,$ then $\text{\rm Mod}_p(\hat\Gamma^{O}(F))>0.$ 
\end{enumerate}

We show that each of  $\{1., 2., 3., 4.,5.\}$ is ``between" $\mathcal R_p(O)<\infty$ and $R_p(h,O)<\infty.$
\begin{theorem}
	\label{thm4} Let $1\leq p<\infty$. Suppose that $(X,d,\mu)$ is a doubling metric measure space that supports a $p$-Poincar\'e inequality. Assume that $X$ has the annular chain property at O.
	Suppose that $X$ has  a weak polar coordinate system $(\mathbb S,\sigma,\Gamma^{O},h)$ at $O$ as in \eqref{intro-polar}.
Then 
\begin{enumerate}
\item[I.] The condition $R_p(h,O)<\infty$ is sufficient for all of $\{ 1., 2., 3.,4.,5. \} $.
\item[II.] The condition $\mathcal R_p(O)<\infty$  is necessary for each of $\{ 1., 2., 3.,4.,5. \} $.
\end{enumerate}
 \end{theorem}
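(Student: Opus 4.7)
The plan is to prove Part II directly from Theorem \ref{thm1.2-1802}, and Part I by first establishing condition 5 via the polar-coordinate estimate \eqref{intro-polar} combined with H\"older's inequality, from which 4 follows by the choice $F=\mathbb S$, and then deducing 2 and 3 (hence 1) by combining 5 with Theorem \ref{thm1} to upgrade ``$p$-a.e.\ infinite curve'' to ``$\sigma$-a.e.\ index $\xi$''. For Part II, assume $\mathcal R_p(O)=\infty$: Theorem \ref{thm1.2-1802} yields $\text{\rm Mod}_p(\Gamma^\infty)=0$ together with a function $u\in\dot N^{1,p}(X)$ satisfying $\lim_{t\to\infty}u(\gamma(t))=\infty$ for every $\gamma\in\Gamma^\infty$; this single $u$ refutes 1 along any radial curve (and hence also 2 and 3), while $\hat\Gamma^{O}(\mathbb S)\subset\Gamma^\infty$ has $p$-modulus zero, which refutes 4 and, via $F=\mathbb S$, also 5.

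For Part I, take $p>1$ (the case $p=1$ is parallel, replacing H\"older by $L^\infty$--$L^1$ pairing and using $R_1(h,O)$). Fix $F\subset\mathbb S$ with $\sigma(F)>0$ and any non-negative Borel $\rho\in L^p(X)$ admissible for $\hat\Gamma^{O}(F)$, i.e.\ $\int_{\hat\gamma_\xi^O}\rho\,ds\geq 1$ whenever $\hat\gamma_\xi^O\in\hat\Gamma^{O}(F)$. Integrating in $\xi\in F$ and inserting the factor $h^{1/p}h^{-1/p}$, H\"older's inequality gives
\[
\sigma(F)\leq \int_F\int_{\hat\gamma_\xi^O}\rho\,ds\,d\sigma \leq \Bigl(\int_F\int_{\hat\gamma_\xi^O}\rho^p h\,ds\,d\sigma\Bigr)^{1/p}\Bigl(\int_F\int_{\hat\gamma_\xi^O}h^{-1/(p-1)}\,ds\,d\sigma\Bigr)^{(p-1)/p}.
\]
Applying \eqref{intro-polar} to $f=\rho^p$ bounds the first factor by $(\mathcal C\|\rho\|_p^p)^{1/p}$, while applying \eqref{intro-polar} to $f=h^{-p/(p-1)}\chi_{X\setminus B(O,1)}$, combined with the fact that every $\hat\gamma_\xi^O$ lies in $X\setminus B(O,1)$, bounds the second factor by $(\mathcal C R_p(h,O))^{(p-1)/p}$. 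Hence $\|\rho\|_p^p\geq \sigma(F)^p/(\mathcal C^p R_p(h,O)^{p-1})$, so $\text{\rm Mod}_p(\hat\Gamma^{O}(F))>0$ with a quantitative lower bound depending only on $\sigma(F)$; this is condition 5, and 4 is the special case $F=\mathbb S$.

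To deduce 2, 3, 1, fix $u\in\dot N^{1,p}(X)$ together with a Newtonian representative and a fixed $p$-integrable upper gradient, and let $\Gamma_0\subset\Gamma^\infty$ collect those infinite curves along which either the upper gradient inequality fails for $u$, or the conclusion of Theorem \ref{thm1} fails (the limit does not exist or disagrees with the universal constant $c(u)$). Then $\text{\rm Mod}_p(\Gamma_0)=0$. If the set $F:=\{\xi\in\mathbb S:\hat\gamma_\xi^{O}\in\Gamma_0\}$ had $\sigma(F)>0$, condition 5 would give $\text{\rm Mod}_p(\hat\Gamma^{O}(F))>0$, contradicting $\hat\Gamma^{O}(F)\subset\Gamma_0$. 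Therefore $\sigma(F)=0$, which simultaneously gives 2 and 3 for this $u$; condition 1 is immediate since $c(u)$ is finite. The main obstacle, and the heart of the argument, is precisely this transfer from a $p$-null exceptional curve family to a $\sigma$-null exceptional index set, for which the quantitative lower bound of the previous step, obtained via the two-weight H\"older trick with the test densities $\rho^p$ and $h^{-p/(p-1)}\chi_{X\setminus B(O,1)}$, is indispensable.
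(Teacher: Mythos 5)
Your overall architecture matches the paper's: necessity of $\mathcal R_p(O)<\infty$ via the function constructed when $\mathcal R_p(O)=\infty$ (Lemma \ref{lem4.4}) combined with Theorem \ref{thm1}, and sufficiency of $R_p(h,O)<\infty$ via the two-weight H\"older estimate against the polar coordinate inequality \eqref{intro-polar}, which is exactly the paper's key estimate \eqref{equation4.8-1705}. Part II is fine in substance (one caveat: Theorem \ref{thm1.2-1802} assumes completeness, which Theorem \ref{thm4} does not; you should cite Lemma \ref{lem4.4} and Theorem \ref{thm1} directly, which is what you in fact use and which needs no completeness).

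However, Part I has a genuine gap: you never deal with the indices $\xi$ for which $\liminf_{t\to\infty}d(O,\gamma_\xi^O(t))\leq 1$. For such $\xi$ the truncated curve $\hat\gamma_\xi^O$ is not even defined, so these $\xi$ contribute nothing to $\hat\Gamma^O(F)$ and are not constrained by the admissibility of $\rho$. Consequently your chain of inequalities only yields $\sigma(F')\leq \mathcal C\,\mathrm{Mod}_p(\hat\Gamma^O(F))^{1/p}R_p(h,O)^{(p-1)/p}$ with $F'=\{\xi\in F:\liminf_{t\to\infty}d(O,\gamma_\xi^O(t))>1\}$, not $\sigma(F)\leq\dots$. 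If the set of ``oscillating'' indices had positive $\sigma$-measure, condition 5 could fail even though your estimate holds, and the same defect propagates into your transfer step for conditions 2 and 3 (your set $F=\{\xi:\hat\gamma_\xi^O\in\Gamma_0\}$ silently excludes precisely those $\xi$ whose curves re-enter $B(O,1)$ infinitely often, along which $u$ need not converge). The missing ingredient is the paper's $F_{\rm bad}$ argument in Lemma \ref{lem4.1}: for each such $\xi$ one has $\int_{\gamma_\xi^O}\chi_{B(O,2)\setminus B(O,1)}\,ds=\infty$, and applying \eqref{intro-polar} to $f=\chi_{B(O,2)\setminus B(O,1)}/h$ (integrable by H\"older because $R_p(h,O)<\infty$, which also forces $h>0$ $\mu$-a.e.\ outside $B(O,1)$) shows that the set of such $\xi$ has $\sigma$-measure zero. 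With that supplement your estimate upgrades to $\sigma(F)\leq \mathcal C\,\mathrm{Mod}_p(\hat\Gamma^O(F))^{1/p}R_p(h,O)^{(p-1)/p}$ and the rest of your argument goes through.
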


Theorem \ref{thm4} leads us to compare the finiteness of $R_p(h,O)$ and   $\mathcal R_p(O).$  First of all, it immediately follows from 
Theorem \ref{thm4} that the finiteness
of $R_p(h,O)$ guarantees the finiteness of $\mathcal R_p(O).$ We do not know a simple direct proof for this.  On the other hand, the finiteness of $\mathcal R_p(O)$ does not in general yield the finiteness of $R_p(h,O).$ To see this, simply consider $\mathbb R^n,$ $n\geq 2,$ with the Euclidean distance and Lebesgue measure, usual spherical coordinates (with normalized measure on $S^{n-1}$) but with the coordinate weight $h=\chi_{\mathbb R^n\setminus B(0,1)}.$ On the other hand, for example, if
	\begin{equation}\label{eq-1802}\notag
h(x)\gtrsim\sum_{j\in\mathbb N}\frac{\mu(A_{2^j}(O))}{(2^j)^p}\chi_{A_{2^j}(O)}(x)
	\end{equation}
 for all $x\in X,$ then $R_p(h,O)\lesssim\mathcal R_p(O).$ 

Let us next consider the  Muckenhoupt $\mathcal A_p$-weighted space $(\mathbb R^n,d_E,w)$, where $n\geq 2$ and $O$ is the origin. Here $d_E$ is the Euclidean distance, and $w$ is a Muckenhoupt $\mathcal A_p$-weight with the associate measure $d\mu=wdx$, i.e. there is a constant $C\geq 1$ such that  for every ball $B\subset\mathbb R^n$,
\begin{equation}\label{eq1.6-0610}
\left(\dashint_{B}wdx\right)^{\frac{1}{p}}\left(\dashint_{B}w^{\frac{1}{1-p}}dx\right)^{\frac{p}{p-1}}\leq C \text{\rm \ \ if $p>1$,}
\end{equation}
and
\begin{equation}\label{eq1.7-0610}
\left(\dashint_{B}wdx \right)\|w^{-1}\|_{L^\infty(B)}\leq C\text{\rm \ \ if $p=1$}.
\end{equation}
Then $\mu$ is doubling and supports a $p$-Poincar\'e inequality, see for instance \cite{HKM06}. The annular chain property at $O$ is satisfied and the usual (normalized) spherical coordinate system satisfies \eqref{intro-polar} at $O.$  We show that $R_{p}(h,O)\approx \mathcal R_{p}(O)$ in Example \ref{lem4.3}. In particular, $R_{p}(h,O)<\infty$ if and only if $1\leq  p<n$ for the unweighted space $\mathbb R^n$. It follows that Theorem \ref{thm4} recovers some of the conclusions in \cite{EKN,Uspen}. 

Next, we consider a $Q$-Ahlfors regular space $X$ that supports a $p$-Poincar\'e inequality, where $1\leq p<\infty$ and  $1\leq Q<\infty.$ Suppose that  $X$ has the annular chain property at $O$ and satisfies the strong form 
\[
\int_{X\setminus B(O,1)}|f|d\mu=\int_{\mathbb S}\int_1^\infty |f(\gamma_\xi(t))|t^{Q-1}\ dtd\sigma(\xi) \text{\rm \ \ (for every integrable function $f$)}
\]
of our weak polar coordinate system.
 Then 
each of $\{1., 2., 3., 4.,5.\}$ is equivalent to $1\leq p<Q$ if $1<Q<\infty$, and  equivalent to  $p=1$ if $Q=1$  since 
\begin{equation}
\label{carnot}\mathcal R_{p}(O)<\infty \implies \begin{cases}
1\leq p<Q & \text{\rm \ \ if \ \ }1< Q<\infty,\\
p=1 & \text{\rm \ \ if \ \ }Q=1.
\end{cases}
\implies R_{p}(h,O)<\infty
\end{equation}
where 
\[R_{p}(h,O)\approx \begin{cases}
\int_1^\infty r^{\frac{(Q-1)}{1-p}}dr &\text{\rm if \ \ }p>1,\\
\|r^{1-Q}\|_{L^\infty([1 ,\infty))} &\text{\rm if \ \ }p=1,
\end{cases}
\text{\ \ and\ \ } \mathcal R_{p}(O)\approx \begin{cases}
\sum_{j\in\mathbb N}(2^j)^{\frac{p-Q}{p-1}}&\text{\rm if \ \ }p>1,\\
\sup_{j\in\mathbb N}(2^j)^{1-Q}&\text{\rm if \ \ }p=1.
\end{cases}
\]
In particular,  by \cite{BT02,Ty20} every polarizable  Carnot group $G$ (especially every group of Heisenberg type) admits a polar coordinate system  \eqref{carnot-polar} that satisfies \eqref{intro-polar}.  Since the Haar 
measure on $G$ is  
$Q$-Ahlfors regular for the homogeneous dimension $Q>1$ of $G$ and supports a $p$-Poincar\'e inequality for each $p$,  each of  $\{1., 2., 3., 4.,5.\}$ is equivalent to $1\leq p<Q<\infty$.

We then obtain the following characterization.

\begin{corollary}\label{local-2908} Let $1\leq p<\infty$ and let $1<Q<\infty$. Then
	\begin{enumerate}
		\item[I.] Each of $\{1., 2., 3., 4.,5.\}$ is equivalent to  $R_{p}(h,O)<\infty$ (or $\mathcal R_{p}(O)<\infty$)   on  the
		Muckenhoupt $\mathcal A_p$-weighted Euclidean spaces.
		\item[II.]  Each of $\{1., 2., 3., 4.,5.\}$ is equivalent to $1\leq p<Q<\infty$  on  polarizable Carnot groups. 
	\end{enumerate}
\end{corollary}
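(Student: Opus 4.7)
The plan is to reduce both parts to Theorem \ref{thm4} by verifying its standing hypotheses in each of the two settings and then showing that in these settings the sufficient condition $R_p(h,O)<\infty$ and the necessary condition $\mathcal{R}_p(O)<\infty$ coincide, so that all five implications collapse to a single equivalence.

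First I would check the hypotheses of Theorem \ref{thm4}. For the Muckenhoupt $\mathcal{A}_p$-weighted Euclidean space $(\mathbb{R}^n, d_E, w\,dx)$, the doubling property and the $p$-Poincaré inequality are classical consequences of \eqref{eq1.6-0610}--\eqref{eq1.7-0610} (cf. \cite{HKM06}). Euclidean annuli centered at the origin are quasiconvex, so the annular chain property at $O$ is immediate; and the normalized spherical coordinate system $\gamma_\xi(t)=t\xi$, $\xi\in S^{n-1}$, with appropriate radial weight $h$ built from $w$, satisfies \eqref{intro-polar} by integration in polar coordinates. For polarizable Carnot groups $G$, the Haar measure is $Q$-Ahlfors regular with $Q>1$, the space supports a $1$-Poincaré inequality, and $G$ is quasiconvex (indeed geodesic), which gives the annular chain property; the polar coordinate system provided by \cite{BT02, Ty20} satisfies the strong form of \eqref{intro-polar} displayed just before \eqref{carnot}.

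For Part I, Example \ref{lem4.3} gives $R_p(h,O)\approx \mathcal{R}_p(O)$ on Muckenhoupt weighted spaces. Combining with Theorem \ref{thm4}, each of $\{1.,2.,3.,4.,5.\}$ is implied by $R_p(h,O)<\infty$ and implies $\mathcal{R}_p(O)<\infty$; the equivalence of these two endpoints then forces all five properties to be equivalent to $R_p(h,O)<\infty$ (equivalently $\mathcal{R}_p(O)<\infty$). For Part II, Ahlfors $Q$-regularity gives $\mu(A_{2^j}(O))\approx 2^{jQ}$, so the explicit expressions for $R_p(h,O)$ and $\mathcal{R}_p(O)$ displayed before the corollary reduce to
\[
R_p(h,O)\approx \int_1^\infty r^{\frac{Q-1}{1-p}}\,dr,\qquad \mathcal{R}_p(O)\approx \sum_{j\in\mathbb{N}}(2^j)^{\frac{p-Q}{p-1}}\qquad (p>1),
\]
and the analogous supremum expressions for $p=1$. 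An elementary computation shows that both series/integrals converge if and only if $1\le p<Q$ (using $Q>1$), which is the chain \eqref{carnot} in the present setting. Invoking Theorem \ref{thm4} once more closes the cycle between the five properties and the inequality $1\le p<Q$.

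The main obstacle will be the verification of \eqref{intro-polar} with a weight sharp enough that Example \ref{lem4.3} actually yields $R_p(h,O)\approx\mathcal{R}_p(O)$ in Part I; this is where the full strength of the $\mathcal{A}_p$-condition \eqref{eq1.6-0610}--\eqref{eq1.7-0610} is used, through the averaging of $w$ and $w^{1/(1-p)}$ on dyadic annuli. For Part II, the analogous step is the polarizability condition from \cite{BT02, Ty20} which provides the precise change-of-variables formula needed to upgrade a weak inequality to the strong form used above. Once these two ingredients are in place, the rest of the argument is bookkeeping between Theorem \ref{thm4} and the explicit computations of $R_p$ and $\mathcal{R}_p$.
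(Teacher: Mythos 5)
Your proposal is correct and follows essentially the same route as the paper: the corollary is obtained by verifying the hypotheses of Theorem \ref{thm4} in each setting, then collapsing the sufficient condition $R_p(h,O)<\infty$ and the necessary condition $\mathcal R_p(O)<\infty$ into a single equivalence via Example \ref{lem4.3} for the $\mathcal A_p$-weighted case and via the Ahlfors-regularity computation \eqref{carnot} for polarizable Carnot groups. The two ingredients you flag as the main obstacles (the sharp coordinate weight $h=|x-O|^{n-1}w(x)\chi_{\mathbb R^n\setminus B(O,1)}$ and the Balogh--Tyson polar coordinates) are exactly the ones the paper supplies.
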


The organization of this paper is as follows. In Section \ref{sec2}, we introduce polar coordinates and recall the basic notions on metric measure spaces. In Sections \ref{sec3}-\ref{sec4}, proofs of Theorem \ref{thm1}-\ref{thm1.2-1802}-\ref{thm4} are given.

Throughout this paper, we use the following conventions. We denote by $O$  the base point in the annular chain property and also refer by $O$  to the coordinate 
point in the polar coordinate \eqref{intro-polar}. 
The notation $A\lesssim B\ (A\gtrsim B)$ means that there is a constant $C>0$ only depending on the data such that $A\leq C \cdot B\ (A\geq C\cdot B)$, and $A\approx B$ means that $A\lesssim B$ and $A\gtrsim B$. For  each locally integrable function $f$ and for every measurable subset $A\subset X$ of positive measure, we let $f_A:=\dashint_Afd\mu=\frac{1}{\mu(A)}\int_Afd\mu$.
\section{Polar coordinates and preliminaries}\label{sec2}
 In Section \ref{sec-polar}, we introduce polar coordinates. We  recall the basic notions of  modulus, doubling, Poincar\'e inequalities, and Lebesgue points in Sections \ref{modulus-capacity}-\ref{admissible}-\ref{lebesgue}.
\subsection{Polar coordinates}\label{sec-polar}
Let $(X,d)$ be a metric space. A  \textit{curve} is a nonconstant continuous mapping from an interval $I\subseteq\mathbb R$ into $X$. The \textit{length} of a curve $\gamma$ is denoted by $l(\gamma)$. A curve $\gamma$ is said to be a  \textit{rectifiable curve} if its length is finite. Similarly, $\gamma$ is a   \textit{locally rectifiable curve} if its restriction to each compact subinterval of $I$ is rectifiable. Each rectifiable curve $\gamma$ will be parameterized by arc length and hence the  \textit{line integral} over $\gamma$ of a Borel function $f$ on $X$ is 
\[\int_\gamma fds =\int_0^{l(\gamma)}f(\gamma(t))dt.
\]
If $\gamma$ is locally rectifiable, then we set 
\[\int_{\gamma}fds=\sup\int_{\gamma'}fds
\]where the supremum is taken over all rectifiable subcurves $\gamma'$ of $\gamma$. Let $\gamma:[0,\infty)\to X$ be a locally rectifiable curve, parameterized by arc length. Then
\[\int_{\gamma}fds=\int_0^\infty f(\gamma(t))dt.
\]
A locally rectifiable curve $\gamma$ is an  \textit{infinite curve} if $\gamma\setminus B\neq\emptyset$ for all balls $B$. Then $\int_{\gamma}ds=\infty.$   
 We denote by $\Gamma^\infty$ the collection of all infinite curves.

Let $\mathbb S$ be a nonempty set (a set of indices). Given a point $O$, we consider collections $\Gamma^{O}(\mathbb S)$ of infinite curves with parameter space $\mathbb S$ starting from $O$, namely
\[\Gamma^{O}(\mathbb S)=\{\gamma^{O}_\xi\in\Gamma^\infty: \gamma_\xi^{O}(0)=O, \xi\in\mathbb S\}.
\]
We say that $(X,d,\mu)$  has a   \textit{weak polar coordinate system} at the \textit{coordinate point} $O$ if there is a choice of a pair $(\mathbb S, \Gamma^{O}(\mathbb S))$  with a  Radon probability measure $\sigma$ on $\mathbb S$, a \textit{coordinate weight} $h:X\to\mathbb [0,\infty)$, and a constant $\mathcal C>0$ such that 
\begin{equation}\label{polar-0103}
\int_{\mathbb S}\int_{\gamma^{O}_\xi}|f|\ h\ dsd\sigma\leq\mathcal C \int_{X}|f|d\mu \text{\rm \ \ for every integrable function $f$.}
\end{equation}
Each $\gamma_\xi^{O}\in\Gamma^{O}(\mathbb S)$ is called a   \textit{radial curve} with respect to $\xi\in\mathbb S$ (starting from $O$). Notice that we have not assumed $h$ to be strictly positive. In applications it will be important to require $h$ to be strictly positive at least almost everywhere in the union of the images of $\gamma_\xi^{O}.$

Let us give examples of weak polar coordinates. First of all, 
in the $n$-dimensional Euclidean space $\mathbb R^n$, where $n\geq 2$, one has the usual spherical coordinate system at a given point $O$: there is a constant $C(n)>0$ depending on $n$ such that
\begin{equation}
\int_{\mathbb R^n}f(x)dx=C(n)\int_{ S^{n-1}}\int_0^\infty f(O+r\cdot\xi)r^{n-1}drd\mathcal H^{n-1} \label{usual-uniform-polar}\text{\rm \ \ for  every integrable function $f$}. 
\end{equation}Here $\mathcal H^{n-1}$ is the (normalized) $(n-1)$-Hausdorff measure and $S^{n-1}$ is the unit sphere centered at $O$. 
For these coordinates, \eqref{polar-0103} holds as an identity.

In \cite{BT02,Ty20}, Balogh and Tyson  produced a  polar coordinate system at the origin on polarizable Carnot groups $G$:  there is a family of horizontal curves $\gamma_\xi:[0,\infty)\to G$ where $\xi$ ranges over a certain compact unit sphere $S\subset G$, and a positive Radon measure $\sigma$ on $S$ so that 
\begin{equation}
\label{carnot-polar} \int_{G}f(g)dg=\int_S\int_0^\infty f(\gamma_\xi(t))t^{Q-1}dtd\sigma(\xi)
\end{equation}
 is valid for every integrable function $f$. Here the integral on the left is taken with respect to the Haar measure in the group and $Q$ denotes the homogeneous dimension of $G$. The unit sphere $S$ is the level set $\{g\in G: N(g)=1\}$ for a certain homogeneous norm $N$ in $G$. 

In \cite[Lemma 3.1]{KNW}, a polar coordinate at the root $O$ on a $K$-regular tree is given. Let $X$ be a $K$-regular tree with its boundary $\partial X.$  We equip  $X$ with a radially weighted distance $\lambda$ and a radially weighted measure $\mu$. Then there is a uniform measure $\nu$ on $\partial X$ such that 
\begin{equation}\notag
\label{tree-polar} \int_{X}fd\mu\approx\int_{\partial X}\int_{[O,\xi)}f(x) \frac{K^{|x|}\mu(x)}{\lambda(x)}ds(x) d\nu(\xi)  
\end{equation}for every integrable function $f$. 
 Here $[O,\xi):=\gamma_\xi$ is the unique geodesic ray from the root $O$ to $\xi\in\partial X$.
 
 In the above examples, one actually has two-sided estimates for the terms in \eqref{polar-0103}. This is not necessarily the case for our weak polar coordinate systems. For example, let $X:=\{(x_1,x_2)\in\mathbb R^2: -1\leq x_2\leq 1\}\cup \{(x_1,x_2)\in\mathbb R^2: |x_2|\leq x_1\}$ and equip it with the Lebesgue measure and the Euclidean distance. Let $S_1=[-1,1]$ and set $\gamma_\xi^1(t)=\begin{cases} (-t,t\cdot\xi),& 0\leq t\leq 1 \\ (-t,\xi),& t>1 \end{cases},$ when $\xi\in[-1,1]$. Define $h_1(x_1,x_2)=\begin{cases}1,& x_1<-1\\ 0,&{\rm otherwise.} \end{cases}$ Then 
 \[
 \int_{S_1}\int_{0}^\infty|f(\gamma_{\xi}^1(t))|\ h_1(\gamma_\xi^1(t))\ dt d\xi \leq \int_X|f(x_1,x_2)| dx_1 dx_2 \text{\rm \ \ for every integrable function $f$}.
 \]This is a weak polar coordinate system. A second weak polar coordinate system on $X$ is obtained by taking $S_2=[-\pi/4, \pi/4]$, $\gamma_\xi^2(t)=e^{i\cdot\xi}$ and $h_2(x_1,x_2)=\|(x_1,x_2)\|\chi_{\{(x_1,x_2)\in X: x_2\geq 0\}}(x_1,x_2)$. Then 
 \[
 \int_{S_2}\int_{0}^\infty|f(\gamma_{\xi}^2(t))|\ h_2(\gamma_\xi^2(t))\ dt d\xi \leq \int_X|f(x_1,x_2)| dx_1 dx_2 \text{\rm \ \ for every integrable function $f$}.
 \]
 For a third one, we take $S_3=S_1\cup S_2, h_3=h_1+h_2$ and $\gamma_\xi^3=\gamma_\xi^1\cup\gamma_\xi^2$. Then
 \[
 \int_{S_3}\int_{0}^\infty|f(\gamma_{\xi}^3(t))|\ h_3(\gamma_\xi^3(t))\ dt d\xi \leq \int_X|f(x_1,x_2)| dx_1 dx_2 \text{\rm \ \ for every integrable function $f$}.
 \]

In \cite[Section 3]{KM98}, there is a weak polar coordinate system on the Cantor $\infty$-diamond. Let us describe it. Let $E_i$ be the usual Cantor set in the unit interval $[i,i+1]$ obtained by first taking out the middle interval of length $1/3$ and leaving two intervals of length $1/3$ and then continuing inductively. The Cantor $\infty$-diamond, denoted by $X$, is obtained by replacing each of the complementary intervals of $E_i$ by a square having that interval as one of its diagonals. Thus we have a line of diamonds along the unit interval, and they are joined up by $E_i$. We consider the map $F:[0,\infty)\times[-1,1]\to X$ defined by 
	\[
		F(x,y)=\left(x,\delta(x)\tan \left(\pi\frac{y}{4}\right)\right).
	\]
Here $\delta(x)$ is the distance from $x$ to $E:=\bigcup_{i=0}^\infty E_i$. Then the map $F$ is simply the vertical projection on $E\times[-1,1]$ and it is one-to-one, locally bi-Lipschitz on $([0,\infty)\setminus E)\times[-1,1]$. The Jacobian of $F$ at $(x,y)$, denoted by $J_F(x,y)$, is $\frac{\pi\delta(x)}{4\cos^2(\pi y/4)}$. Let $\Gamma$ be the family of curves $\gamma_y$, $-1<y<1$, defined by  
	\[
		\gamma_y(x)=F(x,y) {\rm \ \ for\ all\ }y\in[-1,1].
	\]  
Then $\gamma_y$ is $4$-Lipschitz for each $y\in[-1,1]$. We denote by $\sigma$ the $1$-Lebesgue measure on [-1,1]. By the change of variables formula, we have 
	\begin{align*}
		\int_{\Gamma}\int_{\gamma}f\cdot (J_F\circ F^{-1})dsd\left(\frac{\sigma}{2}\right)(\gamma)=&\frac{1}{2}\int_{[-1,1]}\left(\int_{[0,\infty)} (f\circ\gamma_y)(x)\cdot (J_F\circ F^{-1}\circ \gamma_y)(x)\cdot |\dot\gamma_y(x)|dx\right)dy\\ 
		\leq& 2\int_{[-1,1]}\int_{[0,\infty)} (f\circ F)(x,y)\cdot J_F(x,y)dxdy \\
		=&2\int_X f d\mathcal L^2
	\end{align*}
for each positive integrable function $f$ on $X$ with respect to the $2$-Lebesgue measure $\mathcal L^2$. Here the inequality is obtained since $\gamma_y$ is $4$-Lipschitz for each $y\in[-1,1]$ and $(J_F\circ F^{-1}\circ\gamma_y)(x)=(J_F\circ F^{-1})(F(x,y))=J_F(x,y)$.

Finally, we give a definition of Semmes-type families which are related to our weak polar coordinate system.
\begin{definition}[A Semmes type family of infinite curves] \label{definition2.1}
Let $(X,d,\mu)$ be a metric measure space with metric $d$ and measure $\mu$. Given $O\in X$, a family $\Gamma$ of infinite curves starting from $O$ is called Semmes-type if there exist a constant $C>0$ and a Radon probability measure $\sigma$ on $\Gamma$ such that 
\begin{equation}\label{eq-semmes}
\int_{\Gamma}\int_{\gamma}f\ ds\ d\sigma(\gamma)\leq C \int_{X}f(x) \frac{d(x,O)}{\mu(B(O,d(x,O)))}d\mu(x)
\end{equation} 
for every positive measurable function $f$ on $X$ for which the right-hand side of \eqref{eq-semmes} is finite.
\end{definition}

For instance, a family of all radial curves is a Semmes type family on $\mathbb R^n$ by the usual spherical coordinate system \eqref{usual-uniform-polar} or on a polarizable Carnot group by the coordinate system \eqref{carnot-polar}. This definition is naturally modified from the existence of families of rectifiable curves joining pairs of points on metric measure spaces by Semmes in \cite{semmes}.
\subsection{Modulus}\label{modulus-capacity}
 Let $\Gamma$ be a family of curves in a metric measure space $(X,d,\mu)$. Given $1\leq p<\infty$, the  \textit{$p$-modulus} of $\Gamma$, denoted  $\text{\rm Mod}_p(\Gamma)$, is defined by 
\[\text{\rm Mod}_p(\Gamma):=\inf \int_{X}\rho^pd\mu
\]where the infimum is taken over all Borel functions $\rho:X\to[0,\infty]$ satisfying 
$\int_{\gamma}\rho ds\geq 1
$ for every locally rectifiable curve $\gamma\in\Gamma$. A family of curves is called \textit{$p$-exceptional} if it has $p$-modulus zero. We say that a property  holds for \textit{$p$-a.e curve} if the collection of curves for which the property fails  is $p$-exceptional.

Let $u$ be a locally integrable function. A Borel function $\rho:X\to[0,\infty]$ is said to be an \textit{upper gradient} of $u$ if 
\begin{equation}
\label{def-upper-gradient}|u(x)-u(y)|\leq \int_{\gamma}\rho ds
\end{equation}
for every rectifiable curve $\gamma$ connecting $x$ and $y$. Then we  have that \eqref{def-upper-gradient} holds for all compact subcurves of $\gamma\in\Gamma^\infty$.
We say that $\rho$ is a  \textit{$p$-weak upper gradient} of $u$ if \eqref{def-upper-gradient} holds for $p$-a.e rectifiable curve. In what follows, we denote by $g_u$ the  \textit{minimal upper gradient} of $u$, which is unique up to measure zero and which is minimal in the sense that $g_u\leq\rho $ a.e. for every $p$-integrable $p$-weak upper gradient $\rho$ of $u$. In \cite{H03}, the existence and uniqueness of such a minimal upper gradient are given.

The notion of upper gradients is due to Heinonen and Koskela \cite{HK98}, we refer  interested  readers to \cite{BB15,H03,HK98,N00} for a more detailed discussion on upper gradients.

\subsection{Doubling  and Poincar\'e inequalities}\label{admissible}
Let $(X,d)$ be a metric space. A Borel regular measure $\mu$ is called \textit{doubling} if every ball in $X$ has  finite positive  measure and if there exists a constant $C\geq 1$ such that for all balls $B(x,r)$  with radius $r>0$ and center at $x\in X$,
\[\mu(B(x,2r))\leq C\mu(B(x,r)).
\]
Let $1\leq Q<\infty$. A Borel regular measure $\mu$ is said to be \textit{$Q$-Ahlfors regular} if there exists a constant $C\geq 1$ such that for all balls $B(x,r)$ with radius $r>0$ and center at $x\in X$,
\[\frac{r^Q}{C}\leq \mu(B(x,r))\leq Cr^Q.
\] Hence if $\mu$ is $Q$-Ahlfors regular for some $1\leq Q<\infty$, then $\mu$ is a doubling measure.

Let $1\leq p<\infty$. We say that a measure $\mu$ supports a  \textit{$p$-Poincar\'e inequality} if every ball in $X$ has  finite positive measure and if there exist constants $C>0$ and $\lambda\geq 1$ such that 
\[\dashint_{B(x,r)}|u-u_{B(x,r)}|d\mu \leq C r\left (\dashint_{\lambda\cdot B(x,r)}\rho^pd\mu\right )^{\frac{1}{p}}
\] for all balls $B(x,r)$ with radius $r>0$ and center at $x\in X$, and for all pairs $(u,\rho)$ satisfying \eqref{def-upper-gradient} such that $u$ is integrable on balls. Here $\lambda\cdot B(x,r):=B(x,\lambda\cdot r)$ and  $\lambda$ is called the scaling constant of $p$-Poincar\'e inequality or the scaling factor of $p$-Poincar\'e inequality. Since $u\in \dot N^{1,p}(X)$ is integrable on balls, the $p$-Poincar\'e inequality makes sense for any pair $(u,\rho_u)$ where $\rho_u$ is an upper gradient of $u$.

\subsection{Lebesgue points}\label{lebesgue}
 A point $x\in X$ is called a Lebesgue's point of $u$ if $\lim_{r\to0} \dashint_{B(x,r)}|u(y)-u(x)|d\mu(y)=0$ where $B(x,r)$ is the ball with radius $r$ and center at $x$. Let $N_u$ be a set of all points $x\in X$ such that $x$ is not a Lebesgue point of $u$.
 \begin{theorem}[Lebesgue differentiation theorem, see for instance {\cite[Page 77]{pekka}}]\label{thm2.3-2809}
 We have $\mu(N_u)=0$ for every locally integrable function $u$ on $X$.
 \end{theorem}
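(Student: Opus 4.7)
The plan is to prove the result via the usual Hardy--Littlewood maximal function strategy, which goes through in a doubling metric measure space (the doubling assumption on $\mu$ is the standing hypothesis in the relevant parts of the paper). It is enough to show the statement under the additional assumption that $u$ is integrable on $X$, because the conclusion is local and we can freely truncate $u$ by multiplying with the indicator of a ball without affecting $N_u$ outside a null set.

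First I would introduce the (uncentered) maximal operator
\begin{equation*}
Mu(x) := \sup_{B \ni x} \dashint_{B} |u|\, d\mu,
\end{equation*}
where the supremum runs over balls $B$ containing $x$, and establish its weak-type $(1,1)$ estimate
\begin{equation*}
\mu\bigl(\{x \in X : Mu(x) > \lambda\}\bigr) \leq \frac{C_\mu}{\lambda}\int_X |u|\, d\mu, \qquad \lambda > 0,
\end{equation*}
with $C_\mu$ depending only on the doubling constant of $\mu$. This is proved by the standard $5r$-covering (or basic $5B$) lemma: for each $x$ in the level set pick a witnessing ball $B_x$, use the basic covering lemma to extract a pairwise disjoint subcollection $\{B_i\}$ whose $5$-dilations cover the whole level set, and then sum
\begin{equation*}
\mu\bigl(\{Mu>\lambda\}\bigr) \leq \sum_i \mu(5B_i) \lesssim \sum_i \mu(B_i) \leq \frac{1}{\lambda}\sum_i \int_{B_i}|u|\, d\mu \leq \frac{C_\mu}{\lambda}\|u\|_{L^1(\mu)},
\end{equation*}
using doubling in the second inequality and disjointness in the last.

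Next I would invoke the density of continuous (or Lipschitz) functions with bounded support in $L^1(X,\mu)$; this follows from Borel regularity of $\mu$ together with the fact that the doubling property guarantees $(X,d)$ is separable on bounded sets. Then, for $\varepsilon > 0$, pick a continuous $g$ with $\|u-g\|_{L^1(\mu)} < \varepsilon$, and consider
\begin{equation*}
\Lambda u(x) := \limsup_{r \to 0}\dashint_{B(x,r)}|u(y) - u(x)|\, d\mu(y).
\end{equation*}
Writing $u = g + (u-g)$ and using the triangle inequality together with continuity of $g$ (for which $\Lambda g \equiv 0$), one finds $\Lambda u(x) \leq M(u-g)(x) + |u(x) - g(x)|$ pointwise. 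Hence for any $\lambda > 0$,
\begin{equation*}
\{\Lambda u > 2\lambda\} \subseteq \{M(u-g) > \lambda\} \cup \{|u-g|>\lambda\},
\end{equation*}
and both sets have $\mu$-measure at most $C_\mu \varepsilon/\lambda$ by the weak-$(1,1)$ bound and Chebyshev's inequality. Letting $\varepsilon \to 0$ and then $\lambda \to 0$ gives $\mu(\{\Lambda u > 0\}) = 0$, which is precisely $\mu(N_u) = 0$.

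The only genuinely delicate point is the covering step: in a general metric space one does not have Vitali's original covering theorem, so one has to work with the uncentered maximal function and the basic $5B$-covering lemma (valid for any collection of balls of bounded radius, obtained by a Zorn/greedy argument). Everything else is essentially formal once this weak-type bound and the density of continuous functions in $L^1(\mu)$ are in hand.
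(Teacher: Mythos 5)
The paper does not prove this statement at all: it is quoted as a known result with a citation to \cite[Page 77]{pekka}, and your argument is essentially the standard proof given there (weak-type $(1,1)$ maximal function estimate via the basic $5B$-covering lemma, density of continuous functions in $L^1$, and the splitting $\Lambda u \leq M(u-g) + |u-g|$), so there is nothing to compare against in the paper itself. Your proof is correct; the one point worth making explicit is that to apply the covering lemma you should work with the restricted maximal operator $M_R$ taken over balls of radius at most $R$ (since for the full uncentered operator the witnessing balls need not have bounded radii when $\mu(X)<\infty$), which costs nothing because only the behaviour as $r\to 0$ enters the differentiation argument; the localization from integrable to locally integrable $u$ via truncation on a countable cover by balls is fine as you describe.
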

 
\subsection{Chain conditions}
\label{sec-dyadic}
In this paper, we employ the following  annular chain property. 
\begin{definition}
	\label{dyadic}
	Let $\lambda\geq1$. We say that $X$ satisfies an annular $\lambda$-chain condition at $O$  if the following holds. There are constants $c_1\geq 1, c_2\geq 1, \delta>0$ and a finite number $M<\infty$  so that given $r>0$ and points $x,y\in B(O,r)\setminus B(O,r/2)$, one can find balls $B_1, B_2, \ldots, B_k$ with the following properties:
	\begin{enumerate}
		\item[1.] $k\leq M$.
		\item[2.] $B_1=B(x,r/(\lambda c_1))$, $B_k=B(y,r/(\lambda c_1))$ and the radius of  each $B_i$ is $r/(\lambda c_1)$ for $1\leq i\leq k$. 
		\item[3.] $ B_i\subset B(O,c_2r)\setminus B(O,r/c_2)$ for $1\leq i\leq k$.
		\item[4.] For each $1\leq i\leq k-1$, there is a ball $D_i\subset B_i\cap B_{i+1}$ with radius $\delta r$.
	\end{enumerate}
	If $X$ satisfies an annular $\lambda$-chain condition at $O$ for every $\lambda\geq 1$, we say that $X$ has the annular chain property.
\end{definition}
If $X$ satisfies an annular $\lambda$-chain condition at $O$ for some $\lambda\geq 1$ then it also satisfies an annular $\lambda'$-chain condition at $O$ for all $\lambda'$ with $1\leq \lambda'\leq\lambda$ by taking $c_1(\lambda'):=c_1(\lambda)\frac{\lambda}{\lambda'}\geq 1$. Here $c_1(\lambda)$ is the constant $c_1$ with respect to $\lambda$ as in Definition \ref{dyadic}. It follows that $X$ has the annular chain property if and only if there is a sequence $\lambda_k$ with $\lim_{k\to+\infty}\lambda_k=+\infty$ such that $X$ satisfies an annular $\lambda_k$-chain condition at $O$ for each $k\in\mathbb N$.
\begin{lemma}Let $\mu$ be doubling on $(X,d)$.
Suppose that there is a constant $c_0\geq 1$ so that for every $r>0$, each pair $x,y$ of points in $B(O,r)\setminus B(O,r/2)$ can be joined by a curve in $B(O,c_0r)\setminus B(O,r/c_0)$. Then $X$ has the annular chain property.
\end{lemma}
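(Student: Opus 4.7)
The plan is to follow the joining curve supplied by the hypothesis, discretize its image with a maximal separated set, and convert that into a finite chain of balls whose cardinality is controlled by doubling.

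Fix $\lambda\geq 1$, a radius $r>0$, and points $x,y \in B(O,r)\setminus B(O,r/2)$. By hypothesis there is a curve joining $x$ to $y$ whose compact connected image $K$ lies in $B(O,c_0 r)\setminus B(O,r/c_0)$. Set $c_1:=2c_0$, $c_2:=2c_0$, $\rho:=r/(\lambda c_1)$, and $\epsilon:=\rho/16$. I would pick a maximal $\epsilon$-separated subset $\{w_1,\ldots,w_N\}\subset K$ with $w_1=x$ and $w_N=y$ (the trivial case $d(x,y)<\epsilon$ is handled by a direct two-ball chain). The balls $B(w_j,\epsilon/2)$ are pairwise disjoint and all lie in $B(w_1,3c_0 r)$, so iterating the doubling property $\lceil\log_2(6c_0 r/\epsilon)\rceil$ times bounds $N$ by a constant $M$ depending only on $\lambda$, $c_0$, and the doubling constant.

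Next I would extract an abstract chain. On the vertex set $\{w_j\}$, define a graph $G$ by declaring $w_i\sim w_j$ iff $d(w_i,w_j)<\rho/4$. To see $G$ is connected, suppose it split as $V_1\sqcup V_2$; by maximality of the separation, $K$ is covered by the relatively open sets $K_s := K\cap\bigcup_{w\in V_s}B(w,\epsilon)$ for $s=1,2$, which are disjoint (any common point would force $d(w,w')<2\epsilon<\rho/4$ for some $w\in V_1,w'\in V_2$, hence an edge between the components), contradicting the connectedness of $K$. A shortest $G$-path $w_{j_1}=x,\ldots,w_{j_m}=y$ then satisfies $m\leq N\leq M$. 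Setting $B_i:=B(w_{j_i},\rho)$ gives items 1 and 2 of Definition \ref{dyadic}, and item 3 holds because $\rho\leq r/(2c_0)$ keeps each $B_i$ inside $B(O,2c_0 r)\setminus B(O,r/(2c_0))$.

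The main subtlety I anticipate is item 4, which asks that the overlap $B_i\cap B_{i+1}$ contain a full ball of radius $\delta r$. Generic metric spaces need not admit midpoints, so the natural ``symmetric'' attempt (placing $D_i$ at the midpoint of $w_{j_i}$ and $w_{j_{i+1}}$) can fail. I would sidestep this by exploiting the asymmetric inclusion coming from $d(w_{j_i},w_{j_{i+1}})<\rho/4$: the triangle inequality immediately yields $B(w_{j_i},\rho/2)\subset B(w_{j_{i+1}},\rho)=B_{i+1}$, and trivially $B(w_{j_i},\rho/2)\subset B_i$, so $D_i:=B(w_{j_i},\rho/2)$ works with $\delta:=1/(2\lambda c_1)$. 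This is precisely why the net scale must be a small fraction of $\rho$ rather than $\rho$ itself; everything else in the construction is robust to the exact choice of scale.
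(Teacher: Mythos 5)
Your proof is correct and takes essentially the same route as the paper: both discretize the joining curve at scale comparable to $r/(\lambda c_0)$ and use doubling plus disjointness to bound the number of balls by a constant depending only on $c_0$, $\lambda$, and the doubling constant. The only difference is cosmetic — you use a maximal $\epsilon$-separated net and an explicit graph-connectivity argument where the paper invokes the $5B$-covering lemma and leaves the chaining of consecutive balls implicit; your version in fact spells out the step the paper only asserts.
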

\begin{proof}
We follow ideas in the proof of \cite[Theorem 7.2]{HK00}. Let $\lambda\geq 1$. Let $\gamma_{x,y}$ be a curve  in $B(O,c_0r)\setminus B(O,r/c_0)$ joining $x,y\in B(O,r)\setminus B(O,r/2)$.  We consider the collection of all balls $B(w,r/(100\lambda))$ with $w\in B(O,c_0r)\setminus B(O,r/c_0)$. As $\mu$ is doubling, by the $5B$-covering lemma, we find a cover of  $\gamma_{x,y}$ consisting of $k$ of these balls, say $D_1,D_2,\ldots,D_k,$ with $k$ depending only on $c_0,\lambda$ and the doubling constant, so that the following properties hold:
\begin{enumerate}
\item $\gamma_{x,y}\subset\bigcup_{i=1}^k5D_i$.
\item $\{D_i\}_{i=1}^k$ are pairwise disjoint.
\item $5D_i\bigcap 5D_{i+1}\neq\emptyset$ for $1\leq i\leq k-1$.
\end{enumerate}
Let $B_i:=20 D_i$. Then the four properties as in  Definition \ref{dyadic} can be checked to hold for a subcollection of the balls $B_i$ when $\delta=1/(100\lambda), c_1=1/5, c_2=10c_0$.
\end{proof}

\begin{corollary}\label{cor2.4-0103}
If $X$ is annularly quasiconvex as defined  in \cite[Section 8.3]{pekka}, then $X$ has the annular chain property. 
\end{corollary}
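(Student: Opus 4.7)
The plan is to observe that annular quasiconvexity at the point $O$ is, up to a harmless rescaling of the annulus radii, exactly the curve-connectivity hypothesis of the preceding lemma, and then invoke that lemma directly.

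First I would unpack the definition of annular quasiconvexity from \cite[Section 8.3]{pekka}: there exists a constant $A\geq 1$ such that for every $z\in X$ and every $r>0$, any pair of points in $B(z,2r)\setminus B(z,r)$ can be joined by a rectifiable curve lying in $B(z,2Ar)\setminus B(z,r/A)$ (with length controlled by $A$ times the distance between the endpoints, though the length bound is not needed here). Specializing $z=O$ and substituting $r\mapsto r/2$, any two points $x,y\in B(O,r)\setminus B(O,r/2)$ can be joined by a rectifiable curve contained in $B(O,Ar)\setminus B(O,r/(2A))\subset B(O,2Ar)\setminus B(O,r/(2A))$.

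This is precisely the curve-connectivity hypothesis of the preceding lemma with $c_0:=2A$. Since the standing doubling assumption on $\mu$ is in force in the ambient setting of the corollary, the preceding lemma applies and yields that $X$ satisfies an annular $\lambda$-chain condition at $O$ for every $\lambda\geq 1$, i.e.\ $X$ has the annular chain property.

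I do not expect any substantive obstacle here: the corollary is essentially a reformulation of the preceding lemma, obtained by recognizing that the standard definition of annular quasiconvexity is stronger than (and hence directly implies) the curve-in-annulus condition used as a hypothesis in that lemma. The only minor point is the rescaling between the annuli $B(z,2r)\setminus B(z,r)$ (as in the definition) and $B(O,r)\setminus B(O,r/2)$ (as in the lemma's hypothesis), which is purely bookkeeping.
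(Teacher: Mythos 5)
Your proposal is correct and matches the paper's (implicit) argument exactly: the corollary is stated without proof precisely because it follows from the preceding lemma once one notes that annular quasiconvexity at $O$, after the rescaling $r\mapsto r/2$, gives the curve-in-annulus hypothesis with $c_0=2A$. The rescaling and the containment $B(O,Ar)\setminus B(O,r/(2A))\subset B(O,2Ar)\setminus B(O,r/(2A))$ are exactly the bookkeeping you describe.
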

It especially follows from Corollary \ref{cor2.4-0103} together with \cite[Theorem 9.4.1]{pekka} that every complete metric measure space that is $Q$-Ahlfors regular and supports a $p$-Poincar\'e inequality for some $1\leq p<Q$ has the annular chain property.
\section{Proof of Theorem \ref{thm1}}\label{sec3}

In this section, we prove  Theorem \ref{thm1}.
 Throughout this section, we always assume that $(X,d,\mu)$ is doubling and supports a $p$-Poincar\'e inequality with scaling factor $\lambda\geq 1$, and that $u\in\dot N^{1,p}(X)$ with $\rho_u$  a $p$-integrable upper gradient of $u$. We assume that $X$ satisfies an annular $\lambda$-condition at $O$. Let $B(x,r)$ be the ball with radius $r>0$ and center at $x\in X$. We set $\tau B(x,r):=B(x,\tau r)$, $A_{\lambda 2^{j+1}}:=B(O,\lambda2^{j+2})\setminus B(O,\lambda2^{j+1})$, $c_2\cdot A_{\lambda 2^{j+1}}:=B(O,c_2\lambda 2^{j+3})\setminus B(O,2^{j}/(c_2\lambda))$ for  $\tau>0$, $j\in\mathbb N$ where  $c_2$ is as in Definition \ref{dyadic}. Let us begin with useful lemmas, established using ideas from \cite{HK98,HK95,HK00}. 
\begin{lemma}\label{lem3.1} 	Let $1\leq p<\infty$ and $j\in\mathbb N$. Suppose that
	 $E,F$ are two subsets of $A_{\lambda 2^{j+1}}$ such that $|u(x)-u(y)|\geq 1$ for all $x\in E, y\in F$, and that
		$|u(x)-u_{B(x,2^j/c_1)}|\leq {1}/{5}$, $|u(y)-u_{B(y,2^{j}/c_1)}|\leq {1}/{5}$ for some $x\in E$, $y\in F$, where $c_1\geq 1$ is as in Definition \ref{dyadic}.
		Then 
		\begin{equation}\label{eq3.2}
		\mu(c_2\cdot A_{\lambda 2^{j+1}})\lesssim (2^{j})^p \int_{c_2\cdot A_{\lambda 2^{j+1}}}\rho_u^pd\mu.
		\end{equation}
\end{lemma}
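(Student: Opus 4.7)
I would prove \eqref{eq3.2} by connecting the special points $x\in E$ and $y\in F$ (for which the Lebesgue-type hypothesis applies) through the annular chain at $O$ and then estimating a telescoping sum of ball averages of $u$ by the $p$-Poincar\'e inequality. Since $x,y\in B(O,r)\setminus B(O,r/2)$ for $r:=\lambda 2^{j+2}$, applying the annular $\lambda$-chain condition at $O$ gives balls $B_1,\ldots,B_k$ of common radius $r/(\lambda c_1)=2^{j+2}/c_1$, with $k\leq M$, $B_1=B(x,2^{j+2}/c_1)$, $B_k=B(y,2^{j+2}/c_1)$, each $B_i\subset B(O,c_2r)\setminus B(O,r/c_2)$, and with $D_i\subset B_i\cap B_{i+1}$ of radius $\delta r$ for $1\leq i\leq k-1$. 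Note $B(x,2^j/c_1)\subset B_1$ and $B(y,2^j/c_1)\subset B_k$, each with $\mu$-measure comparable to that of $B_1$, respectively $B_k$, by doubling.

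\textbf{Telescoping and Poincar\'e.} Combining $|u(x)-u(y)|\geq 1$ with the two hypotheses on Lebesgue averages, the triangle inequality yields
\[
\tfrac{3}{5}\leq \bigl|u_{B(x,2^j/c_1)}-u_{B(y,2^j/c_1)}\bigr|.
\]
I would then split the right-hand side as a telescoping sum with at most $k+1\leq M+1$ terms, pairing $B(x,2^j/c_1)$ with $B_1$ first, then $B_i$ with $B_{i+1}$ via $D_i$ for $1\leq i\leq k-1$, and finally $B_k$ with $B(y,2^j/c_1)$. In every pair the two balls share a common sub-ball $D$ whose $\mu$-measure is comparable to that of each of them by doubling, so that $|u_B-u_D|\leq (\mu(B)/\mu(D))\dashint_B|u-u_B|d\mu$ together with the $p$-Poincar\'e inequality gives
\[
|u_B-u_{B'}|\lesssim 2^j\Bigl(\dashint_{\lambda \widetilde B}\rho_u^p d\mu\Bigr)^{1/p}
\]
for an appropriate chain ball $\widetilde B$ of radius $\approx 2^j$.

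\textbf{Conclusion and main obstacle.} Because the number of telescoping terms is bounded by the data, there must exist one chain ball $\widetilde B_*$ with $1\lesssim (2^j)^p\dashint_{\lambda\widetilde B_*}\rho_u^p d\mu$, i.e.\ $\mu(\lambda \widetilde B_*)\lesssim (2^j)^p\int_{\lambda\widetilde B_*}\rho_u^p d\mu$. The definition of $c_2\cdot A_{\lambda 2^{j+1}}$ is generous enough to contain $\lambda\widetilde B_*$, and both $\mu(\lambda\widetilde B_*)$ and $\mu(c_2\cdot A_{\lambda 2^{j+1}})$ are comparable to $\mu(B(O,\lambda 2^{j+2}))$ by iterating doubling a bounded number of times (since $\widetilde B_*$ is centered at distance $\approx 2^j$ from $O$ with radius $\approx 2^j$). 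Inserting this two-sided comparison delivers \eqref{eq3.2}. The main obstacle I foresee is bookkeeping: verifying that all radius ratios that appear (namely $r/(\lambda c_1)$ vs.\ $\delta r$, $2^j/c_1$ vs.\ $2^{j+2}/c_1$, and the Poincar\'e dilation $\lambda$) are bounded by the fixed data of $X$, so that each invocation of doubling introduces only a data-dependent constant. Once this is checked, every step in the chain of estimates closes cleanly.
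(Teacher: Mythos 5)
Your proposal is correct and follows essentially the same route as the paper's proof: reduce to $|u_{B(x,2^j/c_1)}-u_{B(y,2^j/c_1)}|\gtrsim 1$ by the triangle inequality, telescope along the annular chain using the overlapping balls $D_i$ and doubling, apply the $p$-Poincar\'e inequality to each chain ball, pigeonhole to find a single ball $B_i$ carrying a definite amount of the sum, and compare $\mu(\lambda B_i)$ with $\mu(c_2\cdot A_{\lambda 2^{j+1}})$ by doubling. The bookkeeping of radius ratios that you flag is exactly the (routine) point the paper suppresses, and it closes as you expect since all ratios are controlled by $c_1,c_2,\delta,\lambda$ and the doubling constant.
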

\begin{proof}We have that $1\leq |u(x)-u(y)|\leq {1}/{5} + |u_{B(x,2^j/c_1)}-u_{B(y,2^j/c_1)}|+{1}/{5}.$
	Hence
		$
		1\lesssim |u_{B(x,2^j/c_1)}-u_{B(y,2^j/c_1)}|.
		$
		Let  $\{B_i\}_{i=1}^k$ statisfy the four properties  in Definition \ref{dyadic}.
	Since $\mu$ is  doubling and supports a $p$-Poincar\'e inequality, it follows that 
		\begin{align*}
		1&\lesssim |u_{B(x,2^j/c_1)}- u_{B(y,2^j/c_1)}|
		\lesssim \sum_{i=1}^k \dashint_{B_i}|u-u_{B_i}|d\mu \lesssim \sum_{i=1}^k 2^j \left (\dashint_{\lambda B_i}\rho_u^pd\mu\right )^{\frac{1}{p}}.
		\end{align*}
	Hence there is an index $i$  such that 
		$1\lesssim 2^j \left (\dashint_{\lambda B_i}\rho_u^pd\mu\right )^{\frac{1}{p}}.
		$
	By the doubling property, we obtain  that 
		\[\mu(c_2\cdot A_{\lambda 2^{j+1}})\lesssim \mu(\lambda B_i)\lesssim (2^j)^p  \int_{\lambda B_i}\rho_u^pd\mu \leq (2^j)^p  \int_{c_2\cdot A_{\lambda 2^{j+1}}}\rho_u^pd\mu
		\] which is \eqref{eq3.2}. 
\end{proof}
\begin{lemma}\label{lem3.2} Let $1\leq p<\infty$ and $j\in \mathbb N$. Suppose that
	 $E$ is a subset of $A_{\lambda 2^{j+1}}$ such that 
	$|u(x)-u_{B(x,2^j/c_1)}|> {1}/{5}
	$ holds for every $x\in E$, where $c_1\geq 1$ is as in Definition \ref{dyadic}. Then 
	\begin{equation}\label{eq3.5}
	\mu(E)\lesssim (2^{j})^p \int_{c_2\cdot A_{\lambda 2^{j+1}}}\rho_u^pd\mu.
	\end{equation}
\end{lemma}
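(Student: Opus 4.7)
My plan is to derive a pointwise lower bound on a restricted Hardy--Littlewood maximal function of $\rho_u^p$ that holds at $\mu$-a.e.\ point of $E$, and then convert this into a measure estimate via the weak-type $(1,1)$ inequality for the maximal operator on doubling metric measure spaces.

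By Theorem \ref{thm2.3-2809}, $\mu$-almost every $x\in E$ is a Lebesgue point of $u$. Fix such an $x$, set $r_k := 2^j/(c_1 2^k)$ for $k\geq 0$, and telescope
\[
u_{B(x,2^j/c_1)} - u(x) = \sum_{k=0}^\infty \bigl(u_{B(x,r_k)} - u_{B(x,r_{k+1})}\bigr).
\]
Since $B(x,r_{k+1})\subset B(x,r_k)$ and $\mu$ is doubling, each summand is controlled by $\dashint_{B(x,r_k)} |u - u_{B(x,r_k)}|\, d\mu$; the $p$-Poincar\'e inequality then gives
\[
|u_{B(x,r_k)} - u_{B(x,r_{k+1})}| \lesssim r_k \left(\dashint_{\lambda B(x,r_k)} \rho_u^p\, d\mu\right)^{1/p}.
\]
Summing the geometric series $\sum 2^{-k}$ and using that each ball $\lambda B(x,r_k)$ is contained in $c_2\cdot A_{\lambda 2^{j+1}}$ (for $c_2$ chosen large enough relative to $\lambda/c_1$), I obtain
\[
|u(x) - u_{B(x,2^j/c_1)}|^p \lesssim (2^j)^p\, M\!\bigl(\rho_u^p\, \chi_{c_2\cdot A_{\lambda 2^{j+1}}}\bigr)(x),
\]
where $M$ is the Hardy--Littlewood maximal operator on $(X,d,\mu)$. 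The hypothesis $|u(x)-u_{B(x,2^j/c_1)}|>1/5$ on $E$ then forces $M(\rho_u^p\chi_{c_2\cdot A_{\lambda 2^{j+1}}})(x) \gtrsim (2^j)^{-p}$ for $\mu$-a.e.\ $x\in E$.

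Finally, the weak-type $(1,1)$ inequality for $M$ on doubling spaces yields
\[
\mu(E)\leq \mu\bigl\{x: M(\rho_u^p\chi_{c_2\cdot A_{\lambda 2^{j+1}}})(x) \gtrsim (2^j)^{-p}\bigr\} \lesssim (2^j)^p \int_{c_2\cdot A_{\lambda 2^{j+1}}} \rho_u^p\, d\mu,
\]
which is exactly \eqref{eq3.5}. The one technical point requiring care is the containment used above: since $r_k \leq 2^j/c_1$ and $x\in A_{\lambda 2^{j+1}}$, the ball $\lambda B(x,r_k)$ is trapped between spheres of radius $\lambda 2^{j+1}-\lambda 2^j/c_1$ and $\lambda 2^{j+2}+\lambda 2^j/c_1$ around $O$, hence inside $c_2\cdot A_{\lambda 2^{j+1}}$ for any sufficiently large $c_2$. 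All the remaining ingredients (telescoping, Poincar\'e, and the weak-type maximal inequality) are standard tools, consistent with the techniques from \cite{HK98,HK95,HK00} flagged at the start of the section, and the proof of the preceding Lemma \ref{lem3.1} already puts the essential chaining machinery in place.
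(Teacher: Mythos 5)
Your argument is correct, and its first half --- restricting to Lebesgue points of $u$ via Theorem \ref{thm2.3-2809}, telescoping over the dyadic balls $B(x,2^{j-k}/c_1)$, and applying doubling plus the $p$-Poincar\'e inequality to each increment --- coincides with the paper's proof. You diverge only in how the resulting series is converted into the measure bound \eqref{eq3.5}. The paper compares the series $\sum_k 2^{-k}\approx 1/5$ term by term against the Poincar\'e bounds to extract, for each $x\in E$, a single good index $i_x$ with $\mu(\lambda B_{i_x})\lesssim (2^j)^p\int_{\lambda B_{i_x}}\rho_u^p\,d\mu$, and then runs the $5B$-covering lemma over the resulting family, using pairwise disjointness of the selected balls inside $c_2\cdot A_{\lambda 2^{j+1}}$ to sum up. You instead sum the geometric series, dominate every average $\dashint_{\lambda B(x,r_k)}\rho_u^p\,d\mu$ by $M\bigl(\rho_u^p\chi_{c_2\cdot A_{\lambda 2^{j+1}}}\bigr)(x)$, and invoke the weak-type $(1,1)$ inequality for the Hardy--Littlewood maximal operator on a doubling space. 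Since that weak-type bound is itself proved by precisely the covering argument the paper writes out, the two proofs are equivalent in substance; yours is a bit shorter at the cost of citing the maximal function theorem as a black box, while the paper's pigeonhole-plus-covering version is self-contained modulo the $5B$-covering lemma. Your containment check is the right technical point to flag, and in fact no largeness of $c_2$ needs to be imposed: with $c_2\cdot A_{\lambda 2^{j+1}}=B(O,c_2\lambda 2^{j+3})\setminus B(O,2^j/(c_2\lambda))$ as defined at the start of Section \ref{sec3} and $c_1,c_2,\lambda\ge 1$, every ball $\lambda B(x,r_k)$ with $x\in A_{\lambda 2^{j+1}}$ and $r_k\le 2^j/c_1$ already lies in this set.
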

\begin{proof}Let $x\in E$. Set $B_i=B(x, 2^{j-i}/c_1)$, $i\in\mathbb N$. {{By Theorem \ref{thm2.3-2809}, we may assume that  every $x\in E$ is a Lebesgue point of $u$.}} Since  $\mu$ is doubling and supports a $p$-Poincar\'e inequality, we have that 
		\begin{align}
		\sum_{i=0}^\infty 2^{-i}\approx \frac{1}{5}< &|u(x)-u_{B(x,2^j/c_1)}|\leq \sum_{i=0}^\infty |u_{B_i}-u_{B_{i+1}}|
		 \lesssim \sum_{i=0}^\infty 2^{j-i}\left (\dashint_{\lambda B_i}\rho_u^pd\mu \right )^{\frac{1}{p}}.\label{eq3.5-1107}
		\end{align}
	 Thus there is an index $i_x$ such that 
	 	$ 2^{-i_x}\lesssim2^{j-i_x} \left (\dashint_{\lambda B_{i_x}}\rho_u^pd\mu \right )^{\frac{1}{p}}$ and so
		\begin{equation}
		\label{eq3.8} \mu(\lambda B_{i_x}) \lesssim  (2^{j})^p \int_{\lambda B_{i_x}}\rho_u^pd\mu.
		\end{equation}
		Hence $E$ has a cover $\{\lambda B_{i_x}: \eqref{eq3.8} \text{\rm \ holds}\}_{x\in E}$. Using the $5B$-covering lemma, there is a pairwise disjoint collection $\{ \lambda B_{i_{x_k}}\}_{k=1}^\infty$ such that $E\subset \bigcup_{k=1}^\infty5\lambda B_{i_{x_k}}$.
		By \eqref{eq3.8}, since $\mu$ is doubling, we obtain that
		\[\mu(E)\leq \sum_{k=1}^\infty\mu(5\lambda B_{i_{x_k}})\lesssim \sum_{k=1}^\infty \mu({\lambda B_{i_{{x_k}}}})\lesssim \sum_{k=1}^\infty (2^j)^p\int_{\lambda B_{i_{x_k}}}\rho_u^pd\mu \leq (2^j)^p\int_{c_2\cdot A_{\lambda 2^{j+1}}}\rho_u^pd\mu
		\]which is \eqref{eq3.5}.
	 Here the last inequality of above is given since $\{\lambda B_{i_{x_k}}\}_{k=1}^\infty$ are pairwise disjoint in $c_2\cdot A_{\lambda 2^{j+1}}$. 
	  The claim follows.
\end{proof}
\begin{proof}[Proof of Theorem \ref{thm1}]
	Let
		$A:=\left \{\gamma\in\Gamma^\infty: \int_\gamma \rho_uds=\infty\right \}.
		$
	We then obtain from \cite[Lemma 5.2.8]{pekka} that  $\text{\rm Mod}_p(A)=0$ since  $u\in \dot N^{1,p}(X)$, and hence 
		$\lim_{t\to\infty}u(\gamma(t)) \text{\rm \ \ exists for all $\gamma \in \Gamma^\infty\setminus A$}.
		$
	It remains to show the  uniqueness of $c$ in \eqref{infinity-limit} for $p$-a.e $\gamma\in\Gamma^\infty$.  We argue by contradiction. 
	By adding a suitable constant to $u$ and finally by multiplying $u$ by another suitable constant, we may assume that
	 there exist two subfamilies of $\Gamma^\infty$, denoted $\Gamma_E,\Gamma_F$, and $\delta>0$ such that 
		\begin{equation}\label{eq3.10}
		\text{\rm Mod}_p(\Gamma_E)\geq \delta>0, \text{\rm Mod}_p(\Gamma_F)\geq \delta>0, \lim_{t\to\infty}u(\gamma(t)) \geq 2, \lim_{t\to\infty}u(\gamma'(t))\leq 0 \text{\rm \ \ for all $\gamma\in\Gamma_E,\gamma'\in\Gamma_F$.}
		\end{equation}
		 We assume that each curve $\gamma$ in these two curve families is parameterized by arc length.
		 Set $\Gamma^j_E=\{\gamma\in \Gamma_E:\ u(\gamma(t))\ge \frac {3}{2} \text{\rm \ for all\ \ } t\geq \lambda 2^{j}\}.$ Then $\Gamma_E=\bigcup_j \Gamma_E^j$ and hence, by the subadditivity of the modulus,
		 $$\text{\rm Mod}_p(\Gamma_E)\le \sum_j \text{\rm Mod}_p(\Gamma_E^j).$$ Hence \eqref{eq3.10} gives the existence of $j_E$ and $\delta_E>0$ so that 
		 $\text{\rm Mod}_p(\Gamma_E^{j_E})\ge \delta_E.$ By arguing analogously for $F,$ we find $j_F$ so that $\text{\rm Mod}_p(\Gamma_F^{j_F})\ge \delta_F>0$ and 
		 $u(\gamma(t))\le \frac {1}{2}$ when $\gamma\in \Gamma_F^{j_F}$ and $t\ge \lambda 2^{j_F}.$		 
		 Let $j\ge \max\{j_E,j_F\}.$
We define sets $E_j$, $F_j$ by setting 
		\[E_j:= \left\{ A_{\lambda 2^{j+1}}\bigcap \left (\bigcup_{\gamma\in \Gamma_E^{j_E}}\gamma\right )\right\}
		\]  
	and 
		\[ F_j:=\left\{ A_{\lambda 2^{j+1}}\bigcap \left (\bigcup_{\gamma \in \Gamma_F^{j_F}} \gamma\right )\right\}.
		\] 
		Then $u(x)\geq 3/2$ for all $x\in E_j$ and $u(x)\leq 1/2$ for all $x\in F_j$. Moreover, $(\lambda2^{j+1})^p\text{\rm Mod}_p(\Gamma_{E}^{j_E})\leq \mu(E_j)$ and $(\lambda2^{j+1})^p\text{\rm Mod}_p(\Gamma_{F}^{j_F})\leq \mu(F_j)$  since every curve in these families has a subcurve of length no less than $\lambda 2^{j+1}$ in $A_{\lambda 2^{j+1}}$ and hence $\chi_{E_j}/(\lambda2^{j+1})$  and $\chi_{F_j}/(\lambda2^{j+1})$  are admissible  functions for computing $\text{\rm Mod}_p(\Gamma_{E}^{j_E})$  and $\text{\rm Mod}_p(\Gamma_{F}^{j_F}),$ respectively.

	Notice that $1\leq |u(x)-u(y)| \leq |u(x)-u_{B(x,2^j/c_1)}|+|u_{B(x,2^j/c_1)}-u_{B(y,2^j/c_1)}|+|u_{B(y,2^j/c_1)}-u(y)|$ for all $x\in E_j, y\in F_j$ where $c_1$ is as in Definition \ref{dyadic}. We will consider three cases corresponding to Lemma \ref{lem3.1}-\ref{lem3.2}.
	Applying Lemma \ref{lem3.1} for the pair $(E_j, F_j)$,
	 the estimate \eqref{eq3.2} with $1\leq |u(x)-u(y)|$ for all $x\in E_j, y\in F_j$ gives
		\[\mu(c_2\cdot A_{\lambda 2^{j+1}})\lesssim (2^j)^p\int_{c_2\cdot A_{\lambda 2^{j+1}}}\rho_u^pd\mu 
		\]if $|u(x)-u_{B(x,2^j/c_1)}|\leq \frac{1}{5}$ and $|u(y)-u_{B(y,2^j/c_1)}|\leq \frac{1}{5}$ hold for some $x\in E_j, y\in F_j$. Applying Lemma \ref{lem3.2} for each $E_j,F_j$, the estimate \eqref{eq3.5}  gives 
		\[\min\{\mu(E_j),\mu(F_j)\}\lesssim (2^j)^p\int_{c_2\cdot A_{\lambda 2^{j+1}}}\rho_u^pd\mu 
		\]if either $|u(x)-u_{B(x,2^j/c_1)}|> \frac{1}{5}$ holds for every $x\in E_j$ or $|u(y)-u_{B(y,2^j/c_1)}|>\frac{1}{5}$ holds for every $y\in F_j$. Since both $E_j$  and $F_j$ are subsets of $c_2\cdot A_{\lambda 2^{j+1}}$, the above estimates imply that 
			\[ \min\{\mu(E_j),\mu(F_j)\}\lesssim (2^j)^p\int_{c_2\cdot A_{\lambda 2^{j+1}}}\rho_u^pd\mu.
			\]
	From our upper estimates on $\text{\rm Mod}_p(\Gamma_{E}^{j_E})$ and on $\text{\rm Mod}_p(\Gamma_{F}^{j_F})$ from above we conclude that
	\[\min \{\text{\rm Mod}_p(\Gamma_{E}^{j_E}), \text{\rm Mod}_p(\Gamma_{F}^{j_F})\}\lesssim \int_{c_2\cdot A_{\lambda 2^{j+1}}}\rho_u^pd\mu.
		\]
	By inserting the strictly positive lower bounds for these two modulus, we conclude that
	\[0<\min\{\delta_E,\delta_F\}\lesssim \int_{c_2\cdot A_{\lambda 2^{j+1}}}\rho_u^pd\mu.
	\]
	Since $u\in \dot N^{1,p}(X),$ a contradiction follows by letting $j$ tend to infinity. 	
\end{proof}

\section{Proofs of Theorems \ref{thm1.2-1802}-\ref{thm4}}\label{sec4}

We begin with a sequence of auxiliary lemmas whose proofs rely on arguments similar to those in \cite{EKN,KNW,K}. Fix $1\leq p<\infty$.

\begin{lemma}\label{lem4.1}  Let $(X,d,\mu)$ be a doubling metric measure space that supports a $p$-Poincar\'e inequality. Assume that $X$ has the annular chain property at O. Suppose that $X$ has a weak polar coordinate system at $O$ as in \eqref{intro-polar}.
If $R_{p}(h,O)<\infty$, then the existence  and uniqueness of \eqref{infinity-limit} hold for
 $\sigma$-a.e $\xi \in \mathbb{S}$.
\end{lemma}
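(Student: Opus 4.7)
My plan is to combine the weak polar coordinate inequality \eqref{intro-polar} with Hölder's inequality to obtain a Cauchy criterion for $u\circ\gamma_\xi^O$, and then to transfer the uniqueness provided by Theorem \ref{thm1} from $p$-almost every infinite curve to $\sigma$-almost every index $\xi\in\mathbb S$ via the same polar-coordinate machinery.

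For existence, I would fix $u\in\dot N^{1,p}(X)$ with $p$-integrable upper gradient $\rho_u$ and apply \eqref{intro-polar} twice. Taking $f=\rho_u^p$ gives $\int_{\gamma_\xi^O}\rho_u^p h\, ds<\infty$ for $\sigma$-a.e.\ $\xi$. Taking $f=h^{p/(1-p)}\chi_{X\setminus B(O,1)}$ (when $p>1$) and using $R_p(h,O)<\infty$ gives $\int_{\gamma_\xi^O\cap(X\setminus B(O,1))}h^{1/(1-p)}\, ds<\infty$ for $\sigma$-a.e.\ $\xi$; for $p=1$ one instead uses $\|h^{-1}\|_{L^\infty_\mu(X\setminus B(O,1))}<\infty$ directly. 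After also excluding a $\sigma$-null set of $\xi$ on which the upper gradient inequality along $\gamma_\xi^O$ fails, Hölder's inequality on each interval $[t_1,t_2]$ with $\gamma_\xi^O([t_1,t_2])\subset X\setminus B(O,1)$ gives
\[
|u(\gamma_\xi^O(t_2))-u(\gamma_\xi^O(t_1))|\leq\int_{t_1}^{t_2}\rho_u\, ds\leq\left(\int_{t_1}^{t_2}\rho_u^p h\, ds\right)^{\frac{1}{p}}\left(\int_{t_1}^{t_2}h^{\frac{1}{1-p}}\, ds\right)^{\frac{p-1}{p}}.
\]
Both factors on the right are tails of convergent integrals and tend to zero, making $u\circ\gamma_\xi^O$ Cauchy as $t_1\to\infty$, so the limit $c(\xi):=\lim_{t\to\infty}u(\gamma_\xi^O(t))$ exists.

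For uniqueness I would appeal to Theorem \ref{thm1} to produce a universal constant $c\in\mathbb R$ and a family $A\subset\Gamma^\infty$ with $\text{\rm Mod}_p(A)=0$ such that $\lim_{t\to\infty}u(\gamma(t))=c$ for every $\gamma\in\Gamma^\infty\setminus A$. I would then pick a Borel $\rho\geq 0$ with $\int_X\rho^p\, d\mu<\infty$ and $\int_\gamma\rho\, ds=\infty$ for all $\gamma\in A$. Repeating the Hölder estimate above with $\rho$ in place of $\rho_u$ and integrating over $[t_\xi,\infty)$ would yield $\int_{\hat\gamma_\xi^O}\rho\, ds<\infty$ for $\sigma$-a.e.\ $\xi$ for which the truncation $\hat\gamma_\xi^O$ is defined. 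Hence $\hat\gamma_\xi^O\in\Gamma^\infty\setminus A$, so $\lim_{t\to\infty}u(\hat\gamma_\xi^O(t))=c$; since $\hat\gamma_\xi^O$ is a shift of a tail of $\gamma_\xi^O$, the same value $c$ is attained by $\lim_{t\to\infty}u(\gamma_\xi^O(t))$, giving $c(\xi)=c$ $\sigma$-a.e.

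The hard part will be verifying that the truncation $\hat\gamma_\xi^O$ is well-defined for $\sigma$-a.e.\ $\xi$, i.e.\ that $\liminf_{t\to\infty}d(O,\gamma_\xi^O(t))>1$ outside a $\sigma$-null set. Without this, a radial curve could re-enter $B(O,1)$ infinitely often and the Hölder bound (which controls oscillation of $u$ only outside $B(O,1)$) would leave the behavior between such returns uncontrolled. I would handle this by applying \eqref{intro-polar} with $f$ supported in a thin shell $B(O,1+\varepsilon)\setminus B(O,1-\varepsilon)$; combined with $\mu(B(O,2))<\infty$ and the $\mu$-a.e.\ positivity of $h$ on $X\setminus B(O,1)$ forced by $R_p(h,O)<\infty$, this should exclude $\sigma$-almost all oscillatory curves. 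Once this technicality is dispatched, the remaining steps reduce to routine manipulations of Hölder and the polar coordinate inequality.
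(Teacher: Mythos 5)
Your proposal is correct and follows essentially the same route as the paper: the universal limit value comes from Theorem \ref{thm1}, the indices whose curves re-enter $B(O,1)$ infinitely often are shown to be $\sigma$-null by an infinite-length estimate in an annulus combined with \eqref{intro-polar} and H\"older, and the remaining exceptional indices are handled by transferring the $p$-modulus zero of the truncated family $\hat\Gamma^{O}(F)$ to $\sigma$-measure zero via the same H\"older/polar-coordinate computation (the paper takes an infimum over admissible $g$ where you use a single $\rho\in L^p$ with $\int_\gamma\rho\,ds=\infty$ on the exceptional family, which is equivalent). Your separate Cauchy-criterion argument for existence is sound but redundant, since your uniqueness step already yields existence along $\sigma$-a.e.\ truncated curve.
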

\begin{proof} Let $u\in\dot N^{1,p}(X)$.
By Theorem \ref{thm1},  there exists $c\in\mathbb R$ such that 
\begin{equation}\label{eq4.1-1205}
\lim_{t\to\infty}u(\gamma(t)) \text{\rm \ \ exists and \ \ }\lim_{t\to\infty} u(\gamma(t))=c \text{\rm \ \ for $p$-a.e $\gamma\in\Gamma^{+\infty}$.}
\end{equation}
Let $F$ be the collection of all $\xi\in \mathbb S$ such that $\lim_{t\to\infty}u(\gamma_\xi^O)$ does not exist or exists but is  not equal to $c$.  It suffices to prove that 
\begin{equation} \label{eq4.2-1205} \sigma(F)=0.
\end{equation}
 We first set $F_{\rm bad}:=\{\xi\in F: \liminf_{t\to\infty} d(O, \gamma_\xi^O)\leq 1 \}$. Let $\gamma_\xi^O\in F_{\rm bad}$. By the definition of infinite curves, we have $\gamma_\xi^O\setminus B(O,r)\neq \emptyset$ for all  $r>0$ and hence there is a sequence $\{t_n\}_{n\in\mathbb N}$ such that $d(O, \gamma_\xi^O(t_n))\geq n$. From $\liminf_{t\to\infty}d(O,\gamma_\xi^O)\leq 1$, it follows that there is also (after passing to a subsequence) 
a sequence $\{s_n\}_{n\in\mathbb N}$ with $\lim_{n\to \infty}s_n=\infty$ such that $d(O,\gamma_\xi^O(s_n))\leq 1$ and $s_n\in (t_{n},t_{n+1})$ for all $n\in\mathbb N$. Roughly speaking, $\gamma_\xi^O$ 
oscillates infinitely often between the unit ball and far away parts of our space. Therefore, it is clear that 
\[
\int_{\gamma_\xi^O}\chi_{B(O,2)\setminus B(O,1)} ds =\infty
\]
for each $\gamma_\xi^O\in F_{\rm bad}$. Also, $h(x)>0$ almost everywhere in the complement of $B(O,1)$ because $R_p(h,O)<\infty.$
We then obtain that for all $m\in\mathbb N$,
\begin{align}
m\ \sigma(F_{\rm bad})\leq & \int_{F_{\rm bad}}\int_{\gamma_\xi^O}\chi_{B(O,2)\setminus B(O,1)}dsd\sigma(\xi)\notag \\
\leq & \int_{\mathbb S} \int_{\gamma_\xi^O} \chi_{B(O,2)\setminus B(O,1)}dsd\sigma(\xi)\notag \\
\leq & \mathcal C \int_{X} \frac{\chi_{B(O,2)\setminus B(O,1)}}{h}d\mu\notag \\
\leq & \mathcal C  [\mu(B(O,2)\setminus B(O,1))]^{1/p} \left(\int_{B(O,2)\setminus B(O,1)}h^{\frac{p}{1-p}}d\mu \right)^{\frac{p-1}{p}}\notag \\
\leq & \mathcal C [\mu(B(O,2)\setminus B(O,1))]^{1/p} R_p^{\frac{p-1}{p}}(h,O)\label{eq4.3-1205}
\end{align}
if $p>1$, and similarly 
\begin{equation}\label{eq4.4-1205}
m\ \sigma(F_{\rm bad})\leq \mathcal C \mu(B(O,2)\setminus  B(O,1)) \ R_{1}(h,O)
\end{equation}  by  \eqref{intro-polar} and the H\"older inequality, and where $\mathcal C>0$ is the constant of our weak polar coordinate system \eqref{intro-polar}. Here the weak polar coordinate system \eqref{intro-polar} can be applied since $\frac{\chi_{B(O,2)\setminus B(O,1)}}{h}$ is integrable. Letting $m\to\infty$, the above estimates give 
\begin{equation}\label{eq4.5-1705}
\sigma(F_{\rm bad})=0
\end{equation}
 for $1\leq p<\infty$ since the right-hand side of \eqref{eq4.3-1205}-\eqref{eq4.4-1205} is 
bounded. 
 
 Let $\gamma_\xi^O\in F\setminus F_{\rm bad}$. Then $\liminf_{t\to \infty}d(O,\gamma_\xi^O(t))>1$, and so there is $t_\xi>0$ such that $d(O,\gamma_\xi^O(t_\xi))=1$ 
and $\gamma^O_\xi(t)\cap B(O,1)=\emptyset$ for all $t> t_\xi$. Let us define $\hat \gamma_\xi^O$, a subcurve of $\gamma_\xi^O,$ by setting $\hat \gamma_\xi^O(t)=\gamma_\xi^O(t+t_\xi)$ for all $t\geq 0$. Let $\hat\Gamma^O(F)$ be the 
collection of all these infinite subcurves $\hat\gamma_\xi^O$ of $\gamma_\xi^O$ with 
respect to $\xi\in F$ satisfying $\liminf_{t\to\infty}d(O,\gamma_\xi^O(t))>1$.
Then, for all $\hat\gamma_\xi^O\in\hat\Gamma^O(F)$,  
  \begin{equation}\label{eq4.7-1705}
  \hat \gamma_\xi^O\cap B(O,1)=\emptyset
  \end{equation}
   and 
\[
\lim_{t\to\infty}u(\gamma_\xi^O(t)) \text{\rm \ \ does not exist or is not equal to $c$}
\]
since $\xi \in F$. By \eqref{eq4.1-1205}, we also have 
\begin{equation}\label{eq4.8-1705}
\text{\rm Mod}_p (\hat\Gamma^O(F))=0.
\end{equation}
   Let $g$ be admissible for computing  $\text{\rm Mod}_p( \hat\Gamma^O(F))$. Then we may assume that $g=0$ on $B(O,1)$ since $\hat \gamma_\xi^O\cap B(O,1)=\emptyset$ by \eqref{eq4.7-1705}. Suppose first that $p>1.$
By the same arguments as for $\eqref{eq4.3-1205}$-$\eqref{eq4.4-1205}$, we obtain that 
\[
\sigma(F\setminus F_{\rm bad})\leq \mathcal C \left( \int_X g^pd\mu \right)^{1/p}  R_p^{\frac{p-1}{p}}(h,O).
\]
Since $g$ is arbitrary, it follows that 
\[
\sigma(F\setminus F_{\rm bad})\leq \mathcal C \left( \text{\rm Mod}_p(\hat \Gamma^O(F))\right)^{1/p}  R_p^{\frac{p-1}{p}}(h,O).
\]
Combining with \eqref{eq4.5-1705}, we conclude that 
\begin{equation}\label{equation4.8-1705}
	\sigma(F)\leq \mathcal C \left( \text{\rm Mod}_p(\hat \Gamma^O(F))\right)^{1/p} R_p^{\frac{p-1}{p}}(h,O).
\end{equation}
As $R_p(h,O)<\infty$ and $\text{\rm Mod}_p(\hat\Gamma^O(F))=0$ by \eqref{eq4.8-1705},  we conclude that $\eqref{eq4.2-1205}$ holds. The case $p=1$ follows via an analogous argument.
\end{proof}


\begin{lemma}
	\label{lem4.4} 
	If $\mathcal R_{p}(O)=\infty$, then there is $u\in\dot N^{1,p}(X)$ such that 
		$\liminf_{t\to\infty}u(\gamma(t))= \infty 
		$
		 for every $\gamma\in\Gamma^\infty$.
\end{lemma}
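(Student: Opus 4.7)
The plan is to construct a radial function $u(x):=\phi(d(O,x))$ for a carefully chosen nondecreasing, absolutely continuous $\phi:[0,\infty)\to[0,\infty)$ with $\phi(r)\to\infty$ as $r\to\infty$, and verify that $u\in\dot N^{1,p}(X)$. Because $d(O,\cdot)$ is $1$-Lipschitz and $\phi$ is nondecreasing, the function $g(x):=\phi'(d(O,x))$ will be an upper gradient of $u$ along any rectifiable curve, and since an infinite curve eventually leaves every ball (in the sense used in the paper when discussing $\lim_{t\to\infty} u(\gamma(t))$), $u(\gamma(t))=\phi(d(O,\gamma(t)))$ will tend to infinity.

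The heart of the argument is a dual construction of the profile. I would first produce nonnegative coefficients $(a_j)_{j\in\mathbb N}$ with
\[
\sum_{j\in\mathbb N} a_j=\infty \quad\text{and}\quad \sum_{j\in\mathbb N} a_j^{p}\,(2^j)^{-p}\mu(A_{2^j}(O))<\infty .
\]
For $p>1$, view this as an unbounded-functional statement on the weighted $\ell^p$ space with norm $\|a\|_p^p=\sum_j a_j^p w_j$, where $w_j:=(2^j)^{-p}\mu(A_{2^j}(O))$. By the conjugate-exponent computation, the linear functional $L(a)=\sum_j a_j$ has dual norm
\[
\Bigl(\sum_j w_j^{-p'/p}\Bigr)^{1/p'}=\mathcal R_p(O)^{1/p'},\qquad p':=\tfrac{p}{p-1},
\]
since $w_j^{-p'/p}=(2^j)^{p/(p-1)}\mu(A_{2^j}(O))^{-1/(p-1)}$. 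Hence $\mathcal R_p(O)=\infty$ forces $L$ to be unbounded on the unit ball; taking a geometric convex combination of approximate extremizers yields a single sequence $(a_j)$ with finite weighted $\ell^p$ norm and divergent sum. For $p=1$, since $\mathcal R_1(O)=\sup_j 2^j/\mu(A_{2^j}(O))=\infty$, choose a subsequence $j_k$ along which $w_{j_k}:=2^{-j_k}\mu(A_{2^{j_k}}(O))\to 0$ fast enough and set $a_{j_k}$ so that $\sum_k a_{j_k}=\infty$ and $\sum_k a_{j_k} w_{j_k}<\infty$, with $a_j=0$ off the subsequence.

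Second, set $\phi'(r)=a_j/2^j$ on $[2^j,2^{j+1})$, $\phi'\equiv 0$ on $[0,1)$, and $\phi(r):=\int_0^r \phi'(s)\,ds$. Then $\phi(2^{j+1})-\phi(2^j)=a_j$, so $\sum_j a_j=\infty$ gives $\phi(r)\to\infty$. For any rectifiable curve connecting $x$ to $y$, $|u(x)-u(y)|\le|\phi(d(O,x))-\phi(d(O,y))|\le\int_\gamma g\,ds$, since $\phi$ is nondecreasing and absolutely continuous and $d(O,\cdot)$ is $1$-Lipschitz. An annular layer-cake decomposition gives
\[
\int_X g^p\,d\mu=\sum_{j\in\mathbb N}(a_j/2^j)^{p}\,\mu(A_{2^j}(O))=\sum_j a_j^{p} w_j<\infty,
\]
so $u\in\dot N^{1,p}(X)$. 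Combining with $\phi(d(O,\gamma(t)))\to\infty$ for every $\gamma\in\Gamma^\infty$ then yields $\liminf_{t\to\infty} u(\gamma(t))=\infty$.

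The main obstacle is the dual production of $(a_j)$ in Step~1, particularly a clean unified treatment of $p>1$ (via the conjugate-exponent norm formula) and $p=1$ (via an ad hoc subsequence selection). A secondary subtlety is Step~3: the argument uses that an infinite curve not only has unbounded image but has $d(O,\gamma(t))\to\infty$, so that the monotone profile $\phi$ indeed forces $u(\gamma(t))\to\infty$; this matches the way $\lim_{t\to\infty} u(\gamma(t))$ is used throughout the paper, whereas curves with $\liminf_{t\to\infty} d(O,\gamma(t))$ finite are handled as ``bad'' curves (as in the proof of Lemma~\ref{lem4.1}) rather than as counterexamples here.
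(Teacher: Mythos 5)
Your proposal is correct and proves the same statement, but by a recognizably different route from the paper's, so a comparison is worthwhile. The paper does not use a radial profile: it splits the divergent series $\mathcal R_p(O)=\sum_j(2^j)^{p/(p-1)}\mu^{1/(1-p)}(A_{2^j}(O))$ into consecutive blocks $[n_k,n_{k+1}]$ whose partial sums each exceed $2^k$, places on each block the H\"older-extremal density normalized so that crossing the block contributes a fixed amount to any line integral from $O$ outward, and then defines $u(x)=\inf\int_{\gamma_{O,x}}g_p\,ds$ over curves joining $O$ to $x$; the divergence $u(x)\to\infty$ as $d(O,x)\to\infty$ then rests on the estimate $\int_{\gamma_{O,x}\cap A_{2^i}}ds\gtrsim 2^i$. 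You replace the explicit block decomposition by the duality observation that $a\mapsto\sum_j a_j$ has norm $\mathcal R_p(O)^{(p-1)/p}$ on the weighted $\ell^p$ space with weights $w_j=(2^j)^{-p}\mu(A_{2^j}(O))$ (your exponent computation is correct, and the geometric convex combination of nonnegative approximate extremizers does produce a single admissible sequence), and you replace the curve-infimum function by $u=\phi\circ d(O,\cdot)$, for which $\phi'(d(O,\cdot))$ is an upper gradient because $d(O,\cdot)$ is $1$-Lipschitz and $\phi$ is nondecreasing and locally Lipschitz. This is slightly cleaner: you obtain $u(x)=\phi(d(O,x))$ exactly rather than only from below, and you avoid the diameter estimate for connecting curves; the paper's blocks are in effect the concrete realization of your approximate extremizers, so nothing essential is lost. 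The caveat you raise at the end applies equally to both arguments and is worth stating plainly: an infinite curve is only required to have unbounded image, so it may return to a fixed ball around $O$ infinitely often (such curves are exactly the set $F_{\rm bad}$ in the proof of Lemma \ref{lem4.1}), and both your $u$ and the paper's are bounded on $B(O,1)$, so neither literally yields $\liminf_{t\to\infty}u(\gamma(t))=\infty$ for such a curve. What both constructions do give is $\limsup_{t\to\infty}u(\gamma(t))=\infty$ for every $\gamma\in\Gamma^\infty$, together with $\liminf=\infty$ whenever $d(O,\gamma(t))\to\infty$; the $\limsup$ statement already prevents $\lim_{t\to\infty}u(\gamma(t))$ from existing in $\mathbb R$, which is all that the applications in Theorems \ref{thm1.2-1802} and \ref{thm4} require. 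In short, your proof matches the paper's in substance and rigor while being somewhat more transparent about where the hypothesis $\mathcal R_p(O)=\infty$ enters.
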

\begin{proof}
	By $\mathcal R_{p}(O)=\infty$,
	there exists a  sequence $\{n_{k}\}_{k=1}^\infty$ (or $\{i_{k}\}_{k=1}^\infty$) with
\begin{equation}\label{equ4.6}
\text{ $n_1<n_2<\ldots$ (or $i_1<i_2<\ldots$) such that $\lim_{k\to\infty}n_k=\infty$ (or $\lim_{k\to\infty}i_k=\infty$),}
\end{equation}	
	
	\begin{equation}
	\label{eq4.6} \sum_{i=n_k}^{n_{k+1}} (2^i)^{\frac{p}{p-1}}\mu^{\frac{1}{1-p}}(A_{2^i}) >2^k \text{\rm  \ \ \, if $p>1$ $\big($or  \ \ \ }2^{i_k}\mu^{-1}(A_{2^{i_k}})>2^k \text{\ \ \ \rm if  $p=1\big)$}.
	\end{equation} 
	Here $A_{2^i}:=B(O,2^{i+1})\setminus B(O,2^i)$. Let
	\begin{equation}\notag
	\label{eq4.7}  g_p(x)=\sum_{k=1}^\infty \left (\sum_{i=n_{k}}^{{n_{k+1}}}\frac{(2^i)^{\frac{1}{p-1}}\mu^{\frac{1}{1-p}}(A_{2^i})}{\sum_{i=n_k}^{n_{k+1}} (2^i)^{\frac{p}{p-1}}\mu^{\frac{1}{1-p}}(A_{2^i}) } \chi_{A_{2^i}}(x)\right ) \text{\rm \  if $p>1$\ }
	\big({\rm or\ \ }g_1(x)=\sum_{k=1}^\infty 2^{-i_k}\chi_{A_{2^{i_k}}}(x)\big).
	\end{equation}
	We define
	$u(x):=\inf \int_{\gamma_{O,x}} g_pds$  for $x\in X$
	where the infimum is taken over all rectifiable curves $\gamma_{O,x}$ connecting $O$ and $x$. Then $g_p$ is an upper gradient of $u$, see for instance \cite[Page 188-189]{pekka}.
	We have the fact (see for instance \cite[Proposition 5.1.11]{pekka}) that
	$\int_{\gamma_{O,x}\cap A_{2^i}}ds \geq \text{\rm diam}(\gamma_{O,x}\cap A_{2^i})\gtrsim 2^i 
	$ for every $\gamma_{O,x}$ with $d(O,x)\geq 2^{i+1}$.
	Here $\text{\rm diam}(\gamma_{O,x}\cap A_{2^i})$ is the diameter of $\gamma_{O,x}\cap A_{2^i}$.
	We let $\gamma\in\Gamma^\infty$ and let $N>1$. For all $x\in  \gamma$ with $d(O,x)=N$, we have that 
	\begin{align*}
	u(x)=&\inf_{\gamma_{O,x}} \int_{\gamma_{O,x}} g_pds =  \inf_{\gamma_{O,x}} \sum_{2^{n_{k+1}}\leq N}\sum_{i=n_k}^{n_{k+1}} \frac{(2^i)^{\frac{1}{p-1}}\mu^{\frac{1}{1-p}}(A_{2^i})}{\sum_{i=n_k}^{n_{k+1}} (2^i)^{\frac{p}{p-1}}\mu^{\frac{1}{1-p}}(A_{2^i}) } \int_{\gamma_{O,x}\cap A_{2^i}}ds \\
	\gtrsim&  \sum_{2^{n_{k+1}}\leq N}\sum_{i=n_k}^{n_{k+1}} \frac{(2^i)^{\frac{1}{p-1}}\mu^{\frac{1}{1-p}}(A_{2^i})}{\sum_{i=n_k}^{n_{k+1}} (2^i)^{\frac{p}{p-1}}\mu^{\frac{1}{1-p}}(A_{2^i}) } 2^i
	=\sum_{2^{n_{k+1}}\leq N}1 \to \infty \text{\rm \ \ as $N\to \infty$\ \ \ if $p>1$,}
	\end{align*}
	(or that
	$
	u(x)=\inf_{\gamma_{O,x}}\int_{\gamma_{O,x}}g_1ds = \inf_{\gamma_{O,x}} \sum_{2^{i_k}\leq  N} 2^{-i_k}\int_{\gamma_{O,x}\cap A_{2^{i_k}}}ds 
	\gtrsim  \sum_{2^{i_k}\leq N}2^{-i_k} 2^{i_k}= \sum_{2^{i_k}\leq  N}1\to\infty 
	$ as $N\to\infty$).
	Hence
	$\liminf_{t\to \infty}u(\gamma(t))=\infty 
	$
	 for every $\gamma\in\Gamma^\infty$.
	It remains to show that $g_p$ is $p$-integrable. 
	Using \eqref{equ4.6}-\eqref{eq4.6}, we have that 
	\begin{align*}
	 \int_{X}g_p^pd\mu =\sum_{k=1}^\infty \sum_{i=n_{k}}^{{n_{k+1}}}  \int_{A_{2^i}}\left (\frac{(2^i)^{\frac{1}{p-1}}\mu^{\frac{1}{1-p}}(A_{2^i})}{\sum_{i=n_k}^{n_{k+1}} (2^i)^{\frac{p}{p-1}}\mu^{\frac{1}{1-p}}(A_{2^i})}\right )^p d\mu\notag 
	= \sum_{k=1}^\infty \frac{1}{ \left (\sum_{i=n_k}^{n_{k+1}} (2^i)^{\frac{p}{p-1}}\mu^{\frac{1}{1-p}}(A_{2^i}) \right )^{p-1}} \leq \sum_{k=1}^\infty\frac{1}{2^{k(p-1)}} 
	\end{align*}
	if $p>1$ (or that
	$
	 \int_{X}g_1d\mu =\sum_{k=1}^\infty \int_{A_{2^{i_k}}} 2^{-i_k}d\mu =\sum_{k=1}2^{-i_{k}}\mu(A_{2^{i_k}})\leq \sum_{k=1}^\infty\frac{1}{2^{k}}).\notag
	$ The claim follows.
	\end{proof}

\begin{example}
	\label{lem4.3}  If $h(x)=|x-O|^{n-1}w(x)\chi_{\mathbb R^n\setminus B(O,1)}(x)$, where $w$ is a classical Muckenhoupt $\mathcal A_p$-weight on $\mathbb R^n$ and where $n\geq 2$, then $ R_{p}(h,O)\approx \mathcal R_{p}(O)$.
\end{example}
\begin{proof}Let $A_{2^i}=B(O,2^{i+1})\setminus B(O,2^{i})$.
	Notice that
	\[R_{p}(h,O) =\sum_{i=0}^\infty \int_{A_{2^i}}|x-O|^{\frac{p(n-1)}{1-p}}w^{\frac{1}{1-p}}(x)dx \text{ if $p>1$ $\big($ or\ } R_{1}(h,O)=\sup_{i\in\mathbb N}\left \||x-O|^{1-n}w^{-1}(x)\right \|_{L^\infty(A_{2^i})}\big).
	\] 
	Since $|x-O|\approx 2^i$ for any $x\in A_{2^i}$ and $|A_{2^i}|\approx (2^i)^n$, it follows that
	\[R_{p}(h,O)\approx \sum_{i=0}^\infty (2^i)^{\frac{p(n-1)}{1-p}+n}\dashint_{A_{2^i}}w^{\frac{1}{1-p}}(x)dx\text{\ \ \rm if $p>1$ $\big($ or  \ }
	R_{1}(h,O)\approx \sup_{i\in\mathbb N}\left (2^i)^{1-n}\|w^{-1}(x)\right \|_{L^\infty(A_{2^i})}\text{\ \ \rm if $p=1$}\big).
	\]
	Because $w$ is a Muckenhoupt $\mathcal A_p$-weight, we have from \eqref{eq1.6-0610}-\eqref{eq1.7-0610} that 
	\[\left (\dashint_{A_{2^i}}w^{\frac{1}{1-p}}(x)dx\right ) \approx \left (\dashint_{A_{2^i}}w(x)dx\right )^{\frac{1}{1-p}} \approx (2^i)^{\frac{-n}{1-p}} \mu^{\frac{1}{1-p}}(A_{2^i}) \text{ if }p>1,
	\]
	$\big($or that
	$\left \| w^{-1}(x)\right \|_{L^{\infty}(A_{2^i})} \approx \left (\dashint_{A_{2^i}}w(x)dx\right )^{-1}\approx(2^i)^n\mu^{-1}(A_{2^i})\big).
	$ Inserting these into the above formula of $R_{p}(h,O)$, we obtain the claim.
\end{proof}

\begin{lemma}
\label{lem-ii-to-iii}  Let $(X,d,\mu)$ be a doubling metric measure space that supports a $p$-Poincar\'e inequality. Assume that $X$ is complete. If $\mathcal R_p(O)<\infty$ then $\text{\rm Mod}_p(\Gamma^\infty)>0$.
\end{lemma}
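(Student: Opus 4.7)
The plan is to derive a uniform positive lower bound on the $p$-capacity $\mathrm{Cap}_p(\overline{B(O,1)},X\setminus B(O,2^k))$ as $k\to\infty$ and then invoke the standard equivalence between this form of $p$-hyperbolicity of $X$ at $O$ and $\mathrm{Mod}_p(\Gamma^\infty)>0$ in complete doubling $p$-Poincar\'e spaces (see, e.g., \cite{HoKo01,Hol99,pekka}). I focus on $p>1$; the case $p=1$ is entirely parallel, H\"older's inequality being replaced by $\sum_j a_jb_j\le(\sup_j a_j)(\sum_j b_j)$.

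Let $u$ be admissible for the condenser, with $p$-integrable upper gradient $g$. For each $j\in\{0,\ldots,k-1\}$ the annular chain property at $O$, applied at scale $\sim 2^{j+1}$, supplies a chain of boundedly many balls of radius $\sim 2^j$, all contained in an enlarged annulus $A_j^{*}\subset B(O,C\cdot 2^{j+1})\setminus B(O,2^j/C)$, joining a ball $B_j^-\subset A_{2^j}$ to a ball $B_j^+\subset A_{2^{j+1}}$. The $p$-Poincar\'e/doubling ball-chain argument that underlies the proofs of Lemmas~\ref{lem3.1}--\ref{lem3.2} then yields
\begin{equation*}
|u_{B_j^-}-u_{B_j^+}|\lesssim 2^j\left(\dashint_{A_j^{*}}g^p\,d\mu\right)^{1/p}.
\end{equation*}
Telescoping over $j$ and arranging $B_0^-\subset\overline{B(O,1)}$ (so $u_{B_0^-}=0$) and $B_{k-1}^+\subset X\setminus B(O,2^k)$ (so $u_{B_{k-1}^+}=1$, which uses reverse doubling, automatic at these scales under our hypotheses) gives
\begin{equation*}
1\lesssim \sum_{j=0}^{k-1}2^j\left(\dashint_{A_j^{*}}g^p\,d\mu\right)^{1/p}.
\end{equation*}

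Applying H\"older's inequality with exponents $p$ and $p/(p-1)$, the comparability $\mu(A_j^{*})\approx \mu(A_{2^j})$ (from doubling and reverse doubling), and the bounded overlap of the family $\{A_j^{*}\}_j$ (multiplicity depending only on $C$), we obtain
\begin{equation*}
1\lesssim \left(\sum_{j=0}^{k-1}(2^j)^{p/(p-1)}\mu(A_{2^j})^{1/(1-p)}\right)^{(p-1)/p}\left(\int_X g^p\,d\mu\right)^{1/p}\lesssim \mathcal R_p(O)^{(p-1)/p}\left(\int_X g^p\,d\mu\right)^{1/p}.
\end{equation*}
Rearranging, $\int_X g^p\,d\mu\gtrsim \mathcal R_p(O)^{-(p-1)}$ uniformly in $u$ and in $k$, so $\mathrm{Cap}_p(\overline{B(O,1)},X\setminus B(O,2^k))\gtrsim \mathcal R_p(O)^{-(p-1)}>0$ uniformly in $k$. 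The equivalence cited at the outset then yields $\mathrm{Mod}_p(\Gamma^\infty)>0$. The main obstacle I foresee is the careful bookkeeping in the telescoping at the innermost and outermost scales---matching the average $u_{B_0^-}$ to the inner boundary value $0$ and $u_{B_{k-1}^+}$ to the outer boundary value $1$---together with the precise invocation, under our hypotheses, of the passage from a uniform condenser-capacity lower bound to positivity of the modulus of the full infinite-curve family $\Gamma^\infty$.
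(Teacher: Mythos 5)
Your overall architecture coincides with the paper's: establish a uniform positive lower bound on $\mathrm{Cap}_p(B(O,1),X\setminus B(O,2^k))$ in terms of $\mathcal R_p(O)^{1-p}$, and then pass from this form of $p$-hyperbolicity to $\mathrm{Mod}_p(\Gamma^\infty)>0$. Where you differ is in how the capacity bound is obtained: you prove it by hand via an annular ball-chain, telescoping, H\"older, and the bounded overlap of the enlarged annuli, whereas the paper first replaces $d$ by a biLipschitz-equivalent geodesic metric (using completeness, doubling and Poincar\'e, \cite[Corollary 8.3.16]{pekka}) and then quotes the capacity estimate of \cite[Theorem 2.10]{HoKo01} wholesale. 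Your computation is correct as far as it goes (the key points --- that the annuli $A_j^{*}$ have bounded overlap so that $\sum_j\int_{A_j^{*}}g^p\,d\mu\lesssim\int_Xg^p\,d\mu$, and that $A_{2^j}\subset A_j^{*}$ so the H\"older factor is dominated by $\mathcal R_p(O)$ --- are exactly right), and it is essentially the proof of the quoted Holopainen--Koskela theorem.

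Two caveats. First, your argument invokes the annular chain property of Definition \ref{dyadic}, which is \emph{not} a hypothesis of this lemma; the lemma assumes only completeness, doubling and Poincar\'e. The paper deliberately avoids the chain property here by working in the geodesic metric, where the needed annulus-localized Poincar\'e estimate is available. Since the lemma is only applied inside Theorem \ref{thm1.2-1802}, where the chain property at $O$ is assumed, your version suffices for that application, but it proves a formally weaker statement than the one displayed. Second, the final step --- passing from $\inf_k\mathrm{Mod}_p(\Gamma(B(O,1),X\setminus B(O,2^k)))>0$ to $\mathrm{Mod}_p(\Gamma^\infty)>0$ --- is not a routine equivalence and is not contained in the references you list for it; it requires a genuine limiting argument for curve families. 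The paper handles it by combining $\mathrm{Cap}_p\leq\mathrm{Mod}_p$ from \cite[p.~12]{HK98} with \cite[Theorem 4.2]{Sha19}, and you should cite the latter (or supply the compactness argument) rather than treat it as folklore.
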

\begin{proof}
Since $X$ is complete and doubling, and supports a $p$-Poincar\'e inequality, there exists a geodesic metric $\hat{d}$ that is biLipschitz equivalent to $d$, see for instance in \cite[Corollary 8.3.16]{pekka}. Let $\mathcal R_p(\hat{d},O)$ be the version of $\mathcal R_p(O)$ in $(X,\hat{d},\mu)$ and $B_{\hat{d}}(x,r)$ be the ball with radius $r$ and center at $x$ on $(X,\hat{d})$. It follows that $\mathcal R_p(\hat{d},O)<\infty$ since $\mathcal R_p(O)<\infty$. 
Since $\hat{d}$ is geodesic  we are allowed to employ
Theorem 2.10 in \cite{HoKo01}. This result gives us a constant $C>0$ such that for all $i\in\mathbb N$
\[
C\leq {\rm Cap}_p(B_{\hat{d}}(O,1),B_{\hat{d}}(O,2^i)) \left(\mathcal R_p(\hat{d},O)\right)^{p-1}
\]
 when $p>1$  $\big($or otherwise
 \[
 C\leq {\rm Cap}_1(B_{\hat{d}}(O,1),B_{\hat{d}}(O,2^i)) \mathcal R_1(\hat{d},O)\big).
\] 
Here ${\rm Cap}_p(B_{\hat{d}}(O,1),B_{\hat{d}}(O,2^i))$ is the quantity
\[
{\rm Cap}_p(B_{\hat{d}}(O,1),B_{\hat{d}}(O,2^i)):=\inf\int_Xg_u^pd\mu
\]where the infimum is taken over all functions $u:X\to\mathbb R$ with the minimal upper gradient $g_u$ such that $u|_{B_{\hat{d}}(O,1)}\equiv 1$ and $u|_{X\setminus B_{\hat{d}}(O,2^i)}\equiv 0$.
 It follows from $\mathcal R_p(\hat{d},O)<\infty$  that  ${\rm Cap}_p(B_{\hat{d}}(O,1),B_{\hat{d}}(O,2^i))\geq \delta>0$  for all $i\in\mathbb N$, and for some $\delta$ only depending on $C$ and $\mathcal R_p(\hat{d},O)$. 
By \cite[Page 12]{HK98}, we obtain that \[{\rm Cap}_p(B_{\hat{d}}(O,1),B_{\hat{d}}(O,2^i))\leq \text{\rm Mod}_p (B_{\hat{d}}(O,1),B_{\hat{d}}(O,2^i)) \text{\rm\ \ for all $i\in\mathbb N$.} \] Here $\text{\rm Mod}_p (B_{\hat{d}}(O,1),B_{\hat{d}}(O,2^i))$ is the $p$-modulus of the family of all retifiable curves connecting $B_{\hat{d}}(O,1)$ and $X\setminus B_{\hat{d}}(O,2^i)$. Then $ \text{\rm Mod}_p (B_{\hat{d}}(O,1),B_{\hat{d}}(O,2^i))\geq \delta >0 \text{\rm \ \ for all $i\in\mathbb N$}.$
It follows that $X$ is $p$-hyperbolic in the sense of \cite[Definition 2.4]{Sha19}. Hence \cite[Theorem 4.2]{Sha19} yields that $\text{\rm Mod}_p(\Gamma^\infty)>0$.
The claim follows. 
\end{proof}

\begin{proof}[Proof of Theorem \ref{thm1.2-1802}]

${\rm I.}\Rightarrow {\rm II.}$ is given by Lemma \ref{lem-ii-to-iii}.

${\rm II.}\Rightarrow {\rm III.}$ is given by Theorem \ref{thm1}.

${\rm III.}\Rightarrow {\rm I.}$ is given by Lemma \ref{lem4.4}.
\end{proof}

\begin{proof}[Proof of Theorem \ref{thm4}]
	
	$R_{p}(h,O)<\infty \Rightarrow 3.$ is given by Lemma \ref{lem4.1}.
	
	$3. \Rightarrow 2. \Rightarrow 1.$ is trivial.

$1. \Rightarrow \mathcal R_{p}(O)<\infty$ is given by Lemma \ref{lem4.4}. 
	
	$R_{p}(h,O)<\infty \Rightarrow 4.$ is given by the  estimate \eqref{equation4.8-1705}.

	$R_p(h,O)<\infty\Rightarrow 5.$ is given by the  estimate \eqref{equation4.8-1705}.
	
	$4. \Rightarrow \mathcal R_{p}(O)<\infty$ is given by Lemma \ref{lem4.4} together with Theorem \ref{thm1}.

	$5. \Rightarrow \mathcal R_p(O)<\infty$ is given by Lemma \ref{lem4.4}
together with Theorem 	\ref{thm1}. Indeed, if $\mathcal R_p(O)=\infty$ then $\text{\rm Mod}_p(\hat\Gamma^{O}(F))=0$ for all subsets $F\subseteq\mathbb S$ with $\sigma(F)>0$.
\end{proof}
\section*{Acknowledgements}
We thank the anonymous referee for a careful reading and thoughtful
comments on the paper.
Both authors have been supported by the Academy of Finland Grant number 323960.

\end{document}